\numberwithin{equation}{section}
\newtheorem{Theorem}{Theorem}[section]
\newtheorem{Lemma}[Theorem]{Lemma}
\newcommand{\R}{\mathbb{R}}
\newcommand{\C}{\mathbb{C}}
\newcommand{\Z}{\mathbb{Z}}
\newcommand{\N}{\mathbb{N}}
\renewcommand{\H}{H^{(1)}}
\newcommand{\HH}{H^{(2)}}
\newcommand{\w}{w^{(1)}}
\newcommand{\ww}{w^{(2)}}
\DeclareMathOperator{\sn}{sn} 
\DeclareMathOperator{\cn}{cn} 
\DeclareMathOperator{\dn}{dn} 
\DeclareMathOperator{\am}{am} 
\begin{document}

\newcommand{\arXivNumber}{1808.04877}

\renewcommand{\PaperNumber}{131}

\FirstPageHeading

\ShortArticleName{Eigenvalue Problems for Lam\'e's Differential Equation}

\ArticleName{Eigenvalue Problems for Lam\'e's Differential Equation}

\Author{Hans VOLKMER}

\AuthorNameForHeading{H.~Volkmer}

\Address{Department of Mathematical Sciences, University of Wisconsin-Milwaukee,\\ P.O.~Box 413, Milwaukee, WI, 53201, USA}
\Email{\href{mailto:volkmer@uwm.edu}{volkmer@uwm.edu}}

\ArticleDates{Received August 14, 2018, in final form December 06, 2018; Published online December 12, 2018}

\Abstract{The Floquet eigenvalue problem and a generalized form of the Wangerin eigenvalue problem for Lam\'e's differential equation are discussed. Results include comparison theorems for eigenvalues and analytic continuation, zeros and limiting cases of (generalized) Lam\'e--Wangerin eigenfunctions. Algebraic Lam\'e functions and Lam\'e polynomials appear as special cases of Lam\'e--Wangerin functions.}

\Keywords{Lam\'e functions; singular Sturm--Liouville problems; tridiagonal matrices}

\Classification{33E10; 34B30}

\section{Introduction}
The Lam\'e equation (Arscott \cite[Chapter~IX]{A}) is
\begin{gather}\label{I:lame}
\frac{{\rm d}^2w}{{\rm d}z^2}+\big(h-\nu(\nu+1)k^2 \sn^2(z,k)\big) w=0 ,
\end{gather}
where $\sn(z,k)$ is the Jacobian elliptic function with modulus $k\in(0,1)$ (Whittaker and Watson \cite[Chapter~XXII]{WW}), $\nu\in \R$ and $h$ is the eigenvalue parameter. This equation has regular singularities at the points $z=2mK+{\rm i}(2n+1)K'$, where $m$, $n$ are integers and $K=K(k)$ and $K'=K'(k)$ denote complete elliptic integrals. Various eigenvalue problems for the Lam\'e equation have been treated in the literature.

The Lam\'e equation is an even Hill's equation with fundamental period $2K$. The theory of Hill's equation is well-known; see, e.g., Arscott~\cite{A}, Eastham~\cite{Eastham} and Magnus and Winkler~\cite{MW}. Results on the periodic eigenvalue problem specific for the Lam\'e equation with eigenfunctions satisfying $w(z+2K)=\pm w(z)$ can be found in \cite[Section~15,5]{EMO}, \cite{I1,J} and \cite[Chapter~29]{O}. These functions have many applications; see, e.g.,~\cite{BMT}. In Section \ref{F} of this paper we will consider the more general Floquet eigenvalue problem $w(z+2K)={\rm e}^{{\rm i}\mu\pi} w(z)$. For the general Hill's equation this eigenvalue problem is treated in Eastham~\cite{Eastham}. Some results on the Floquet eigenvalue problem specific for the Lam\'e equation can be found in Ince~\cite[Sections~7 and~8]{I2}.

Wangerin \cite{W} showed that Lam\'e's equation appears when Laplace's equation is separated in confocal cyclidic coordinates of revolution. Such coordinate systems can be found in Moon and Spencer~\cite{MSP} and in Miller~\cite{Miller}. They include flat-ring, flat-disk, bi-cyclide and cap-cyclide coordinates. An outline of Wangerin's results is given in \cite[Section~15.1.3]{EMO}. In order to obtain harmonic functions relevant for applications special solutions of the Lam\'e equation called Lam\'e--Wangerin functions were introduced; see Erd\'elyi~\cite{Erd2}, \cite[p.~88]{EMO}. The Lam\'e--Wangerin eigenvalue problem is obtained when we require
that $(\sn z)^{1/2} w(z)$ stays bounded at the singularities ${\rm i}K'$ and $2K+{\rm i}K'$; see Erd\'elyi \cite{Erd2} and Erd\'elyi, Magnus and Oberhettinger~\cite[Section~15.6]{EMO}. These eigenfunctions will be defined on the segment $({\rm i}K',2K+{\rm i}K')$ but can then be continued analytically.

In Section~\ref{W} of this paper we consider a more general eigenvalue problem whose eigenfunc\-tions~$w(z)$ have the form
\begin{gather*} w(z)=(z-{\rm i}K')^{\nu+1}\sum_{n=0}^\infty q_n (z-{\rm i}K')^{2n} \end{gather*}
at $z={\rm i}K'$ and a similar condition at $z=2K+{\rm i}K'$. We call these eigenfunctions generalized Lam\'e--Wangerin functions. Every classical Lam\'e--Wangerin function is also a generalized Lam\'e--Wangerin function but not vice versa unless $\nu\ge -\frac12$.

The motivation for introducing these functions is as follows. In Section~\ref{F} we show that the eigenvalues of the Floquet eigenvalue problem agree with the eigenvalues of an infinite tridiagonal matrix $F$ (considered in the Hilbert sequence space $\ell^2(\Z)$, $\Z$ the set of integers). One is especially interested in the case that a matrix entry in the diagonal above or below the main diagonal of $F$ vanishes because then the eigenvalue problem splits in two problems whose eigenvalues are given by infinite tridiagonal submatrices of $F$ that are only infinite in one direction (the underlying Hilbert space can be taken as $\ell^2(\N_0)$, $\N_0$ the set of non-negative integers). It turns out that this special case occurs if and only if $\nu+\mu$ or $\nu-\mu$ is an integer ($\nu$ is the parameter in Lam\'e's equation and $\mu$ is the parameter in Floquet's condition.) Interestingly, if $\nu+\mu$ or $\nu-\mu$ is an integer then the eigenvalues of one of the submatrices are identical with the eigenvalues of a~classical Lam\'e--Wangerin problem. This is a simple observation but as far as I know has not been stated in the literature. Of course, the obvious question is: If the eigenvalues of one submatrix of~$F$ are those for a classical Lam\'e--Wangerin problem what is the meaning of the eigenvalues of the complementary submatrix? As we show in this paper, these are the eigenvalues for a~generalized Lam\'e--Wangerin problem (non-classical except when $\nu=-\frac12$).

A second motivation for introducing the generalized Lam\'e--Wangerin functions is as follows. Lam\'e polynomials and algebraic Lam\'e functions are not special cases of classical Lam\'e--Wangerin functions. However, they are special case of generalized Lam\'e--Wangerin function. We show in Sections~\ref{AL} and~\ref{LP} how Lam\'e polynomials and algebraic Lam\'e functions appear in the notation of generalized Lam\'e--Wangerin functions. We should mention that we adopt the name ``algebraic Lam\'e functions'' from \cite[p.~68]{EMO}. These functions are called ``Lam\'e--Wangerin functions'' in Lambe~\cite{L} and non-meromorphic Lam\'e functions in Finkel et al.~\cite{F}.

In Section \ref{C} we compare the eigenvalues of the Floquet and the Lam\'e--Wangerin problems. In Sections~\ref{AL} and~\ref{LP} we show that algebraic Lam\'e functions and Lam\'e polynomials are special cases of (generalized) Lam\'e--Wangerin functions. In Section~\ref{Z} we investigate the number of zeros of Lam\'e--Wangerin eigenfunctions. In Section~\ref{L} we find the limit of Lam\'e--Wangerin functions as $k\to 0$.

I consider some of the results in this paper as new but not all results are new. The treatment of the generalized Lam\'e--Wangerin problem is new. The recursions \eqref{W:rec1} and \eqref{W:rec3} are known from \cite{EMO} but the ``symmetric'' recursions \eqref{W:rec2}, \eqref{W:rec4} appear to be new. The latter recursions are used in some proofs and also in Section~\ref{AL}. The results in Sections~\ref{AC}, \ref{C}, \ref{Z} and~\ref{L} are new. Lam\'e polynomials and algebraic Lam\'e functions are well-known, so I make no claim that Sections~\ref{AL} and~\ref{LP} contain new results.

\section{Floquet solutions}\label{F}
On the real axis $z\in\R$, \eqref{I:lame} is a Hill equation with fundamental period $2K$. Let $\mu\in\R$. We call $h$ a Floquet eigenvalue if there exists a nontrivial solution~$w$ of~\eqref{I:lame} satisfying
\begin{gather}\label{F:Floquet}
w(z+2K)={\rm e}^{{\rm i}\pi\mu} w(z),\qquad z\in \R.
\end{gather}
$w(z)$ is a corresponding Floquet eigenfunction. It is known \cite[p.~31]{Eastham} that the eigenvalues are real and form a sequence converging to $\infty$. We denote the eigenvalues by
\begin{gather*}
 h_0(\mu,\nu,k)\le h_1(\mu,\nu,k)\le h_2(\mu,\nu,k)\le \cdots.
 \end{gather*}
The eigenvalues are counted according to multiplicity.
If $\mu$ is not an integer then
\begin{gather*}
 h_0(\mu,\nu,k)<h_1(\mu,\nu,k)< h_2(\mu,\nu,k)< \cdots.
\end{gather*}
Obviously, we have
\begin{gather}\label{F:rules}
h_m(\mu,\nu,k)=h_m(\mu+2,\nu,k)=h_m(-\mu,\nu,k)=h_m(\mu,-\nu-1,k) .
\end{gather}

Let $w_1(z,h,\nu,k)$ and $w_2(z,h,\nu,k)$ be the solutions of \eqref{I:lame} satisfying the initial conditions $w_1(0)=1$, $\frac{{\rm d}w_1}{{\rm d}z}(0)=0$, $w_2(0)=0$, $\frac{{\rm d}w_2}{{\rm d}z}(0)=1$. Then Hill's discriminant $D$ is given by
\begin{gather*} D(h,\nu,k)=w_1(2K,h,\nu,k)+\frac{{\rm d}w_2}{{\rm d}z}(2K,h,\nu,k) .\end{gather*}
The eigenvalues $h_m(\mu,\nu,k)$ are the solutions of the equation
\begin{gather}\label{F:disc}
 D(h,\nu,k)=2\cos(\mu\pi) ;
\end{gather}
see \cite[equation~(2.4.4)]{Eastham}. From \eqref{F:disc} we easily obtain the following result that will be needed
later.

\begin{Theorem}\label{F:t1}For every $m\in\N_0=\{0,1,2,\dots\}$, the function $(\mu,\nu,k)\mapsto h_m(\mu,\nu,k)$ is con\-ti\-nuous on $\R\times\R\times[0,1)$.
\end{Theorem}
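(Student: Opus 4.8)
The plan is to read the result off equation \eqref{F:disc}, i.e.\ off the identity $D(h,\nu,k)=2\cos(\mu\pi)$, by combining three ingredients: joint continuity of Hill's discriminant, the fact that for every fixed $(\mu,\nu,k)$ all complex roots $h$ of $D(\cdot,\nu,k)=2\cos(\mu\pi)$ are in fact real, and a contour count via Rouch\'e's theorem. First I would establish that $G(h,\mu,\nu,k):=D(h,\nu,k)-2\cos(\mu\pi)$ is jointly continuous on $\R\times\R\times\R\times[0,1)$ and, for fixed $(\nu,k)$, entire in $h$. The solutions $w_1,w_2$ of \eqref{I:lame} with the stated initial data depend continuously (indeed analytically) on $h,\nu$ and continuously on $k$, uniformly for $z$ in any compact interval; since $K=K(k)$ is continuous on $[0,1)$, where it stays finite, evaluating at $z=2K(k)$ keeps $D(h,\nu,k)=w_1(2K)+\frac{dw_2}{dz}(2K)$ continuous, and $2\cos(\mu\pi)$ is continuous in $\mu$.

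The second, decisive, ingredient is that for every $(\mu,\nu,k)$ every complex root $h$ of $G=0$ is real. Indeed, $h$ solves $D(h,\nu,k)=2\cos(\mu\pi)$ exactly when $\mathrm{e}^{\mathrm i\mu\pi}$ is an eigenvalue of the monodromy matrix of \eqref{I:lame} (whose trace is $D$ and whose determinant is $1$), equivalently when \eqref{I:lame} has a nontrivial solution satisfying \eqref{F:Floquet}; that is, exactly when $h$ is an eigenvalue of the self-adjoint operator on $L^2(0,2K)$ given by \eqref{I:lame} together with the boundary conditions $w(2K)=\mathrm{e}^{\mathrm i\mu\pi}w(0)$ and $\frac{dw}{dz}(2K)=\mathrm{e}^{\mathrm i\mu\pi}\frac{dw}{dz}(0)$. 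Self-adjointness forces all such $h$ to be real, for integer $\mu$ (levels $\pm2$) just as for non-integer $\mu$.

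Granting these, I would conclude by an argument-principle count. Fix $p_0=(\mu_0,\nu_0,k_0)$, set $\lambda=h_m(p_0)$, and let $r\in\{1,2\}$ be its multiplicity. Since the zeros of $G(\cdot,p_0)$ are real and isolated, choose $\epsilon>0$ so that the closed disk $|h-\lambda|\le\epsilon$ meets the zero set only at $\lambda$; then $G(\cdot,p_0)$ is bounded away from $0$ on $|h-\lambda|=\epsilon$, and by joint continuity so is $G(\cdot,p)$ for $p$ near $p_0$, whence Rouch\'e gives exactly $r$ zeros of $G(\cdot,p)$ inside the disk, all real by the previous step, hence eigenvalues in $(\lambda-\epsilon,\lambda+\epsilon)$. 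A second, identical count over a region enclosing all eigenvalues below $\lambda-\epsilon$ (using that the spectrum is bounded below locally uniformly, since $-\tfrac{d^2}{dz^2}+\nu(\nu+1)k^2\sn^2(z,k)$ with these boundary conditions has $\inf\mathrm{spec}\ge-|\nu(\nu+1)|k^2$) shows that the index of the lowest of these $r$ eigenvalues is the same for $p$ as for $p_0$. Therefore $h_m(p)\in(\lambda-\epsilon,\lambda+\epsilon)$ for $p$ near $p_0$, which is the asserted continuity.

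I expect the main obstacle to be the apparent loss of continuity of the labelling at integer $\mu$, where $2\cos(\mu\pi)=\pm2$ is an extremum of the cosine and two Floquet eigenvalues coalesce at a closed spectral gap; a priori such a double root might split off the real axis under perturbation and make $h_m$ jump. The point that removes the obstacle is exactly the reality of all roots of $G$ coming from self-adjointness: a collided pair can only remain a real double eigenvalue or separate into two real ones, never leave $\R$, so the Rouch\'e count stays on the real axis and the labelled eigenvalues vary continuously through the integer values of $\mu$ as well. The symmetry relations \eqref{F:rules} serve here only as a consistency check and are not needed for the argument.
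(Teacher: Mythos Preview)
Your argument is correct and follows precisely the route the paper indicates: the paper gives no detailed proof but simply asserts that Theorem~\ref{F:t1} follows easily from~\eqref{F:disc} (adding that it can also be inferred from general Sturm--Liouville results~\cite{KWZ}). Your Rouch\'e count, combined with the reality of all complex roots of $D(\cdot,\nu,k)-2\cos(\mu\pi)$ via self-adjointness of the quasi-periodic boundary problem, is exactly the standard way to make that assertion precise.
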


Theorem \ref{F:t1} can also be inferred from results on Sturm--Liouville theory~\cite{KWZ}.

Following \cite[p.~65]{EMO} we transform \eqref{I:lame} by setting
\begin{gather}\label{F:am}
 t=\tfrac12\pi-\am(z,k),
\end{gather}
where $\am$ is Jacobi's amplitude function. We note that \eqref{F:am} establishes a conformal mapping between the strip $|\Im z|<K'$ and the $t$-plane cut along the rays $m\pi\pm {\rm i} sL$, $s\ge 1$, $m\in\Z$, where
\begin{gather*} L:=\operatorname{arccosh} \frac1k=\frac12\ln \frac{1+k'}{1-k'},\qquad k'=\sqrt{1-k^2}. \end{gather*}
Then
\begin{gather*} \sn z =\cos t, \qquad \cn z =\sin t .\end{gather*}
We obtain
\begin{gather}\label{F:lame2}
\big(1-k^2\cos^2 t\big)\frac{{\rm d}^2w}{{\rm d}t^2}+k^2\cos t \sin t\frac{{\rm d}w}{{\rm d}t}+\big(h-\nu(\nu+1)k^2\cos^2 t\big) w =0.
\end{gather}
Since $\am(z+2K)=\am z+\pi$, condition \eqref{F:Floquet} becomes
\begin{gather*}
 w(t+\pi)={\rm e}^{-{\rm i}\pi \mu} w(t),\qquad t\in\R .
\end{gather*}
This condition is equivalent to ${\rm e}^{{\rm i}\mu t}w(t)$ being periodic with period~$\pi$. Therefore, using Fourier series, eigenfunctions have the form
\begin{gather}\label{F:expansion1}
 w(t)=\sum_{n=-\infty}^\infty c_n {\rm e}^{-{\rm i}(\mu+ 2n) t}.
\end{gather}
By substituting \eqref{F:expansion1} in \eqref{F:lame2}, we obtain the three-term recursion
\begin{gather}\label{F:rec1}
\rho_n c_{n-1}+(\sigma_n-h) c_n+\tau_{n+1}c_{n+1}=0,\qquad n\in\Z,
\end{gather}
where
\begin{gather*}
\rho_n= -\tfrac14k^2 (2n-1+\mu+\nu)(2n-2+\mu-\nu),\\
\sigma_n=\tfrac12k^2 \nu(\nu+1)+\big(1-\tfrac12 k^2\big)(2n+\mu)^2,\\
\tau_n=-\tfrac14k^2(2n+\mu+\nu)(2n-1+\mu-\nu).
\end{gather*}
This recursion is similar to the one given in \cite[equation~(7.1)]{I2} which is based on Fourier cosine series instead of the complex form of Fourier series we used. The behavior of solutions $\{c_n\}_{n\in\Z}$ of~\eqref{F:rec1} as $n\to\infty$ is given by Perron's rule~\cite{P}. If $k\in(0,1)$ we choose $n_0$ so large that $\rho_n\ne 0$ and $\tau_{n+1}\ne 0$ for $n\ge n_0$. Then the solutions $\{c_n\}_{n>n_0}$ of equations~\eqref{F:rec1} for $n\ge n_0$ form a~two-dimensional vector space. There exists a recessive solution which is uniquely determined up to a constant factor with the property
\begin{gather}\label{F:recessive1}
 \lim_{n\to\infty} \frac{c_{n+1}}{c_n}=\frac{1-k'}{1+k'}<1.
\end{gather}
Every solution which is linearly independent of this solution satisfies
\begin{gather*}
 \lim_{n\to\infty} \frac{c_{n+1}}{c_n}=\frac{1+k'}{1-k'}>1.
\end{gather*}
Similar results hold for $n\to-\infty$. We obtain the following theorem.

\begin{Theorem}\label{F:t}Let $\mu,\nu\in\R$ and $k\in(0,1)$. Then $h$ is one of the eigenvalues $h_m(\mu,\nu,k)$
if and only if the recursion \eqref{F:rec1} has a nontrivial solution $\{c_n\}_{n\in\Z}$ such that
\begin{enumerate}\itemsep=0pt
\item[$a)$] either there is $n_0$ such that $c_n=0$ for $n\ge n_0$ or $\{c_n\}$ is recessive as $n\to\infty$; and
\item[$b)$] either there is $n_0$ such that $c_n=0$ for $n\le n_0$ or $\{c_n\}$ is recessive as $n\to-\infty$.
\end{enumerate}
The expansion \eqref{F:expansion1} of a corresponding eigenfunction converges in the strip $|\Im t|<L$.
\end{Theorem}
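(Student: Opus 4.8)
The plan is to identify Floquet eigenfunctions with Fourier series \eqref{F:expansion1} that converge in the strip $|\Im t|<L$, and then to read off from Perron's rule exactly which coefficient sequences $\{c_n\}$ produce such a convergent series. First I would observe that a Floquet eigenfunction, being a solution of \eqref{I:lame}, extends holomorphically to the strip $|\Im z|<K'$ where the coefficients of the equation are analytic; under the conformal map \eqref{F:am} this gives a function $w(t)$ holomorphic in $|\Im t|<L$ for which $\mathrm{e}^{\mathrm{i}\mu t}w(t)$ is $\pi$-periodic. Conversely, every such $w(t)$ transforms back to a Floquet eigenfunction. Thus the problem reduces to characterizing the sequences $\{c_n\}$ for which \eqref{F:expansion1} converges in $|\Im t|<L$.

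Next I would set up the coefficient/convergence dictionary. Since $\mathrm{e}^{2L}=\frac{1+k'}{1-k'}$, the two Perron rates in \eqref{F:recessive1} and the display following it are exactly $\mathrm{e}^{-2L}$ and $\mathrm{e}^{2L}$. Estimating the general term $|c_n|\mathrm{e}^{(\mu+2n)\Im t}$ shows that \eqref{F:expansion1} converges locally uniformly in $|\Im t|<L$ if and only if $\limsup_{n\to\pm\infty}|c_n|^{1/|n|}\le \mathrm{e}^{-2L}$. By Perron's rule a tail of a solution of \eqref{F:rec1}, taken on a range where $\rho_n\neq0$ and $\tau_{n+1}\neq0$, is either recessive, with $|c_n|^{1/|n|}\to \mathrm{e}^{-2L}$, or dominant, with $|c_n|^{1/|n|}\to \mathrm{e}^{2L}$, or eventually zero. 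Only the recessive and eventually-zero alternatives meet the $\limsup$ bound, and these are precisely conditions $a)$ and $b)$; this also yields the final assertion on the strip of convergence.

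With this dictionary the two implications are short. For the forward direction, if $h$ is an eigenvalue then the Fourier coefficients of the corresponding eigenfunction solve \eqref{F:rec1} and, being Fourier coefficients of the bounded $\pi$-periodic function $\mathrm{e}^{\mathrm{i}\mu t}w(t)$, stay bounded; hence the dominant alternative is excluded on each side and $a)$, $b)$ hold. For the converse, given a nontrivial $\{c_n\}$ satisfying $a)$ and $b)$, the bound $\limsup|c_n|^{1/|n|}\le \mathrm{e}^{-2L}$ makes \eqref{F:expansion1} converge locally uniformly in $|\Im t|<L$; term-by-term differentiation, justified by this local uniform convergence, together with the recursion shows the sum solves \eqref{F:lame2}, while the exponentials make $\mathrm{e}^{\mathrm{i}\mu t}w(t)$ have period $\pi$. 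Transforming back, we recover a nontrivial Floquet eigenfunction, so $h$ is an eigenvalue.

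I expect the main obstacle to be the degenerate recursions, that is, the cases $\mu\pm\nu\in\Z$ in which some $\rho_n$ or $\tau_n$ vanish. There the tail solution space no longer splits cleanly into the generic recessive and dominant lines, and genuinely terminating solutions (those with $c_n=0$ for all large $n$, or for all small $n$) appear; I must verify that the trichotomy \emph{recessive / dominant / eventually zero} is exhaustive on each side, so that $a)$ and $b)$ really capture all convergent cases, and that a terminating tail still yields a bona fide solution of \eqref{F:lame2} rather than an artifact of the substitution. The remaining analytic points, namely the automatic holomorphy of eigenfunctions in the strip and the identification of membership in the Fourier class with convergence of \eqref{F:expansion1}, are routine once the Perron rates are matched to the strip width $L$.
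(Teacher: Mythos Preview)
Your proposal is correct and follows the same approach as the paper, which does not give a formal proof but derives the theorem from the discussion immediately preceding it: the Fourier expansion \eqref{F:expansion1} yields the recursion \eqref{F:rec1}, and Perron's rule identifies the recessive rate $\frac{1-k'}{1+k'}=\mathrm{e}^{-2L}$ as precisely the growth condition needed for convergence of \eqref{F:expansion1} in the strip $|\Im t|<L$. Your treatment of the degenerate cases $\mu\pm\nu\in\Z$ is more explicit than the paper's, which simply remarks after the theorem that terminating solutions can occur only when some $\rho_n$ or $\tau_n$ vanishes.
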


Of course, a nontrivial solution $\{c_n\}$ of \eqref{F:rec1} can be zero for $n\ge n_0$ or $n\le n_0$ only when one of the numbers $\rho_n$ or $\tau_n$ vanishes. This happens if and only if at least one of the numbers $\mu\pm\nu$ is an integer. These interesting cases will be discussed in Sections~\ref{C}, \ref{AL} and \ref{LP}.

Alternatively, we may expand
\begin{gather*}
w(t)=\big(1-k^2\cos^2 t\big)^{1/2}\sum_{n=-\infty}^\infty d_n {\rm e}^{-{\rm i}(\mu+ 2n) t}.
\end{gather*}
Then we obtain the ``adjoint'' recursion
\begin{gather}\label{F:rec2}
\tau_n d_{n-1}+(\sigma_n-h) d_n+\rho_{n+1}d_{n+1}=0,\qquad n\in\Z.
\end{gather}

Theorem \ref{F:t} also holds with \eqref{F:rec2} in place of \eqref{F:rec1}.

\section{Generalized Lam\'e--Wangerin functions}\label{W}

A (classical) Lam\'e--Wangerin function $w(z)$ is a nontrivial solution of Lam\'e's equation \eqref{I:lame} with the property that $(\sn z)^{1/2}w(z)$ stays bounded on the segment between the regular singularities~$K'$ and $2K+{\rm i}K'$; see \cite[Section~15.6]{EMO}. Such solutions exist only for specific values of $h$.
If we substitute $z=u+{\rm i}K'$ then we obtain the singular Sturm--Liouville problem \cite{Z}
\begin{gather}\label{W:SL}
\frac{{\rm d}^2w}{{\rm d}u^2}+\big(h-\nu(\nu+1)\sn^{-2}(u,k)\big) w =0,\qquad 0<u<2K,
\end{gather}
with the boundary condition that $(\sn u)^{-1/2} w$ stays bounded on $(0,2K)$.

The eigenvalue problem splits into two problems, one for functions that are even with respect to $K+{\rm i}K'$, that is,
\begin{gather}\label{W:even}
 w(K+{\rm i}K'+s)=w(K+{\rm i}K'-s) \qquad \text{for $-K<s<K$},
\end{gather}
and one for functions which are odd with respect to $K+{\rm i}K'$, that is,
\begin{gather}\label{W:odd}
w(K+{\rm i}K'+s)=-w(K+{\rm i}K'-s) \qquad \text{for $-K<s<K$}.
\end{gather}
Without loss of generality, one may assume that $\nu\ge -\frac12$, and since the exponents at ${\rm i}K'$ and $2K+{\rm i}K'$ are $\{\nu+1, -\nu\}$, a Lam\'e--Wangerin function has the form
\begin{gather}\label{W:form}
 w(z)=(z-{\rm i}K')^{\nu+1}\sum_{n=0}^\infty q_n (z-{\rm i}K')^{2n}
\end{gather}
for $z$ close to ${\rm i}K'$ with $q_0\ne 0$.

We generalize these eigenvalue problems as follows. Let $\nu\in\R$, $0<k<1$. We call $h\in\C$ an eigenvalue of the first Lam\'e--Wangerin problem if~\eqref{I:lame} admits a nontrivial solution~$w$ on the interval $({\rm i}K',2K+{\rm i}K')$ which close to $z={\rm i}K'$ has the form~\eqref{W:form} and satisfies $w'(K+{\rm i}K')=0$. The latter property is equivalent to~\eqref{W:even}. The eigenfunction $w$ will be called a Lam\'e--Wangerin function of the first kind. Note that we consider this eigenvalue problem for all real $\nu$ not just for $\nu\ge -\frac12$. Also note that the condition $q_0\ne 0$ is not required in~\eqref{W:form} although $q_0\ne 0$ will hold if $\nu+\frac12$ is not a negative integer.

Similarly, we call $h$ an eigenvalue of the second Lam\'e--Wangerin problem if~\eqref{I:lame} admits a~nontrivial solution~$w$ on the interval $({\rm i}K',2K+{\rm i}K')$ which close to $z={\rm i}K'$ has the form~\eqref{W:form}
and satisfies $w(K+{\rm i}K')=0$. The latter property is equivalent to~\eqref{W:odd}. The eigenfunction~$w$ will be called a Lam\'e--Wangerin function of the second kind.

If $\nu>-\frac32$ our eigenvalue problems are included in singular Sturm--Liouville theory (see also~\cite{MN}) but this theory does not give us results for $\nu\le -\frac32$. We will treat these eigenvalue problems by a~different method developed below.

We substitute
\begin{gather}\label{W:etat}
 \eta = {\rm e}^{-2{\rm i}t}
\end{gather}
in \eqref{F:lame2}. We obtain the Fuchsian equation
\begin{gather}
k^2\eta(\eta-\eta_1)(\eta-\eta_2)\left[\frac{{\rm d}^2w}{{\rm d}\eta^2}+\frac12\left(\frac{1}{\eta}+\frac{1}{\eta-\eta_1}+\frac{1}{\eta-\eta_2}\right)\frac{{\rm d}w}{{\rm d}\eta}\right]\nonumber\\
\qquad{} +\left(h-k^2\nu(\nu+1)\frac{(1+\eta)^2}{4\eta}\right) w =0,\label{W:lame1}
\end{gather}
where
\begin{gather*} \eta_1:=\frac{1-k'}{1+k'}\in(0,1),\qquad \eta_2:=\frac{1+k'}{1-k'}\in(1,\infty) .\end{gather*}
The differential equation~\eqref{W:lame1} has regular singularities at $\eta=0,\eta_1,\eta_2,\infty$ with exponents $\big\{{-}\frac12\nu,\frac12(\nu+1)\big\}$, $\big\{0,\frac12\big\}$, $\big\{0,\frac12\big\}$, $\big\{{-}\frac12\nu,\frac12(\nu+1)\big\}$, respectively. If we combine \eqref{F:am} with \eqref{W:etat} we obtain
\begin{gather}\label{W:etaz}
 \eta={\rm e}^{-2{\rm i}(\frac12\pi-\am z)}=(\sn z+{\rm i}\cn z)^{-2}=(\sn z-{\rm i}\cn z)^2.
\end{gather}
Setting $z=u+{\rm i}K'$ for $0<u<K$ this gives
\begin{gather*} \eta=\frac{1-\dn u}{1+\dn u}.\end{gather*}
This establishes a bijective increasing map between $u\in(0,K)$ and $\eta\in (0,\eta_1)$. Taking into consideration the behavior of $\eta$ close to $u=0$ and $u=K$ we see that a~Lam\'e--Wangerin function of the first kind expressed in the variable $\eta$ is a solution of~\eqref{W:lame1} on $(0,\eta_1)$ which close to $\eta=0$ is of the form
\begin{gather}\label{W:expansion1}
 w(\eta)=\eta^{(\nu+1)/2}\sum_{n=0}^\infty c_n\eta^n,
\end{gather}
and which is analytic at $\eta=\eta_1$. This implies that the radius of convergence of the power series in~\eqref{W:expansion1} is $\ge \eta_2$. For the coefficients $c_n$ we find the recursion
\begin{gather}
 \big(\beta^{(1)}_0-h\big)c_0+\gamma_1 c_1 =0,\nonumber \\
 \alpha_n c_{n-1}+\big(\beta_n^{(1)}-h\big) c_n+ \gamma_{n+1}c_{n+1}=0,\qquad n\ge 1,\label{W:rec1}
\end{gather}
where
\begin{gather*}
\alpha_n = -\tfrac12k^2(n+\nu)(2n-1),\\
\beta_n^{(1)} = \tfrac12k^2\nu(\nu+1)+\big(1-\tfrac12k^2\big)(2n+\nu+1)^2,\\
\gamma_n = -\tfrac12k^2(2n+2\nu+1)n .
\end{gather*}
Note that the equations \eqref{W:rec1} for $n\ge 1$ agree with~\eqref{F:rec1} when we set $\mu=\nu+1$. The recursion~\eqref{W:rec1} is given in~\cite[Section~15.6(15)]{EMO}.

Using Perron's rule, we see that $h$ is an eigenvalue of the first Lam\'e--Wangerin problem if and only if~\eqref{W:rec1} has a nontrivial solution $\{c_n\}_{n=0}^\infty$ which is either identically zero for large~$n$ or satisfies~\eqref{F:recessive1}. Of course, a nontrivial solution $\{c_n\}_{n=0}^\infty$ of \eqref{W:rec1} can be identically zero for large~$n$ only if one of the numbers~$\alpha_n$ is zero, that is, if $\nu$ is a negative integer.

Alternatively, we may expand a Lam\'e--Wangerin function of the first kind in the form
\begin{gather}\label{W:expansion2}
w(\eta)=\eta^{(\nu+1)/2}(\eta_2-\eta)^{1/2}\sum_{n=0}^\infty a_n\eta^n
\end{gather}
with the power series having radius $\ge \eta_2$. In order to find the recursion for the coefficients $a_n$ we transform~\eqref{W:lame1} by setting
\begin{gather*} w(\eta)=(\eta_2-\eta)^{1/2}v(\eta)\end{gather*}
to
\begin{gather}
k^2\eta(\eta-\eta_1)(\eta-\eta_2)\left[\frac{{\rm d}^2v}{{\rm d}\eta^2} +\frac12\left(\frac{1}{\eta}+\frac{1}{\eta-\eta_1}+\frac{3}{\eta-\eta_2}\right)\frac{{\rm d}v}{{\rm d}\eta}\right]\nonumber\\
\qquad{} +\left(h-k^2\nu(\nu+1)\frac{(1+\eta)^2}{4\eta}+\tfrac14 k^2(2\eta-\eta_1)\right) v =0.\label{W:lame2}
\end{gather}
We obtain the recursion
\begin{gather}
 \big(\epsilon^{(1)}_0-h\big)a_0+\delta_1 a_1 =0,\nonumber\\
 \delta_n a_{n-1}+\big(\epsilon_n^{(1)}-h\big) a_n+ \delta_{n+1}a_{n+1}=0,\qquad n\ge 1,\label{W:rec2}
\end{gather}
where
\begin{gather*}
\delta_n= -\tfrac12k^2 n(2n+2\nu+1),\\
\epsilon_n^{(1)}= \tfrac12k^2\nu(\nu+1)+\big(1-\tfrac12k^2\big)(2n+\nu+1)^2+\tfrac14 k^2\eta_1(4n+2\nu+3)\\
\hphantom{\epsilon_n^{(1)}}{} =\tfrac12 k^2\nu(\nu+1)-k'\big(2n+\tfrac32+\nu\big)+\big(1-\tfrac12k^2\big)\big(\tfrac14+\big(2n+\tfrac32+\nu\big)^2\big).
\end{gather*}

It is a pleasant surprise that, in contrast to \eqref{W:rec1}, recursion \eqref{W:rec2} is of self-adjoint form. We take advantage of this observation and introduce a symmetric operator $S=S^{(1)}(\nu,k)$ in the Hilbert space $\ell^2(\N_0)$ with the standard inner product. The domain of definition of $S$ is
\begin{gather*} D(S)=\left\{ \{x_n\}_{n=0}^\infty\colon \sum_{n=0}^\infty n^4|x_n|^2 <\infty\right\} \end{gather*}
and $S$ is defined on $D(S)$ by
\begin{gather*}
 S(\{x_j\})_0=\epsilon_0^{(1)} x_0 +\delta_1 x_1,\\
S(\{x_j\})_n=\delta_n x_{n-1} +\epsilon_n^{(1)} x_n +\delta_{n+1}x_{n+1} ,\qquad n\ge 1.
\end{gather*}
So $S$ is represented by an infinite symmetric tridiagonal matrix.

\begin{Theorem}\label{W:t1}Let $\nu\in\R$ and $k\in[0,1)$.
\begin{enumerate}\itemsep=0pt
\item[$(a)$] $S^{(1)}(\nu,k)$ is a self-adjoint operator in $\ell^2(\N_0)$ with compact resolvent and bounded below.
\item[$(b)$] If $k\in(0,1)$ the eigenvalues of $S^{(1)}(\nu,k)$ agree with the eigenvalues of the first Lam\'e--Wangerin problem.
\item[$(c)$] If $k\in(0,1)$ the eigenvalues of $S^{(1)}(\nu,k)$ are simple.
\end{enumerate}
\end{Theorem}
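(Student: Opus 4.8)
The plan is to write $S=S_d+B$, where $S_d=\operatorname{diag}\big(\epsilon_n^{(1)}\big)$ is self-adjoint on $D(S_d)=\big\{x\colon\sum n^4|x_n|^2<\infty\big\}=D(S)$ (note $\epsilon_n^{(1)}\sim(4-2k^2)n^2$ and is bounded below), and $B$ is the off-diagonal part $(Bx)_n=\delta_n x_{n-1}+\delta_{n+1}x_{n+1}$. From $|\delta_n x_{n-1}+\delta_{n+1}x_{n+1}|^2\le 2|\delta_n|^2|x_{n-1}|^2+2|\delta_{n+1}|^2|x_{n+1}|^2$ and $\delta_n\sim -k^2 n^2$ I get
\begin{gather*}
\|Bx\|^2\le 2\sum_m\big(|\delta_m|^2+|\delta_{m+1}|^2\big)|x_m|^2 ,
\end{gather*}
and since $2\big(|\delta_m|^2+|\delta_{m+1}|^2\big)/\big(\epsilon_m^{(1)}\big)^2\to 4k^4/(4-2k^2)^2=:a_0^2$, this yields $\|Bx\|\le a\|S_dx\|+C\|x\|$ for any $a\in(a_0,1)$. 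The decisive inequality is $a_0=k^2/(2-k^2)<1$, which holds \emph{exactly because} $k<1$. By Kato--Rellich, $S=S_d+B$ is then self-adjoint on $D(S)$ and bounded below (for $k=0$ the matrix is diagonal). To get the compact resolvent I would use the form bound, valid for finitely supported $x$ (with $\delta_0:=0$),
\begin{gather*}
\langle Sx,x\rangle\ge\sum_n\big(\epsilon_n^{(1)}-|\delta_n|-|\delta_{n+1}|\big)|x_n|^2 ,
\end{gather*}
whose coefficients behave like $(4-4k^2)n^2\to+\infty$; hence the form domain embeds continuously in $\big\{x\colon\sum n^2|x_n|^2<\infty\big\}$ and compactly in $\ell^2(\N_0)$, giving the compact resolvent.

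\textbf{Part (b).} Here I would run Perron's rule on \eqref{W:rec2} exactly as for \eqref{F:rec1} in Section~\ref{F}. Since \eqref{W:rec2} is the recurrence for the same Fuchsian equation \eqref{W:lame1} written in the gauge \eqref{W:expansion2}, its solutions satisfy the same exponential dichotomy: there is a recessive solution with $a_{n+1}/a_n\to\eta_1<1$ (cf.\ \eqref{F:recessive1}) and every independent solution is dominant with ratio $\to\eta_2>1$; indeed the two limiting ratios are the roots of $t^2-\tfrac{4-2k^2}{k^2}t+1=0$, namely $\eta_1$ and $\eta_2$. Thus a solution of \eqref{W:rec2} lies in $\ell^2(\N_0)$ iff it is recessive, and a recessive solution decays geometrically, so it automatically lies in $D(S)$. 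Hence $h$ is an eigenvalue of $S$ iff \eqref{W:rec2} (for all $n\ge0$, including the $n=0$ equation) has a nontrivial recessive solution. On the ODE side, \eqref{W:expansion2} is the Frobenius solution with exponent $\tfrac12(\nu+1)$ at $\eta=0$ whose coefficients satisfy \eqref{W:rec2}; moreover $\eta=\eta_1$ corresponds to $u=K$, and because $d\eta/du\propto\sn u\,\cn u$ vanishes there, the exponent-$0$ solution at $\eta_1$ is even in $u-K$ and satisfies $w'(K+{\rm i}K')=0$, i.e.\ condition \eqref{W:even}. Analyticity at $\eta_1$ (radius of convergence $\ge\eta_2$) is equivalent to the coefficients being recessive, so the first Lam\'e--Wangerin eigenvalues coincide with the eigenvalues of $S$.

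\textbf{Part (c).} When no $\delta_n$ vanishes, that is $\nu\notin-\tfrac12-\N$, the full system \eqref{W:rec2} determines the whole sequence $\{a_n\}$ from $a_0$ via the successive equations $n=0,1,2,\dots$ (each $\delta_{n+1}\ne0$), so the eigenspace is at most one-dimensional and the eigenvalues are simple. I expect the genuine obstacle to be the degenerate case $\nu=-n_0-\tfrac12$, $n_0\in\N$, where $\delta_{n_0}=0$ and $S$ decouples as an orthogonal sum $S_{\mathrm{fin}}\oplus S_\infty$ of an $n_0\times n_0$ tridiagonal block and a semi-infinite Jacobi block. Each summand separately has simple spectrum (nonzero off-diagonals for $S_{\mathrm{fin}}$; the one-dimensional recessive argument for $S_\infty$), so simplicity of $S$ reduces to showing $\operatorname{spec}(S_{\mathrm{fin}})\cap\operatorname{spec}(S_\infty)=\emptyset$. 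Proving this disjointness is the crux: the eigenfunctions of $S_{\mathrm{fin}}$ are the algebraic/polynomial Lam\'e functions treated in Sections~\ref{AL} and~\ref{LP}, and I would separate their eigenvalues from the spectrum of $S_\infty$ either by a Wronskian argument on \eqref{W:lame1} or by tracking the analytic dependence of both spectra on the parameters.
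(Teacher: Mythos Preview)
Your arguments for (a) and (b) are correct and follow essentially the same strategy as the paper. The paper's decomposition is $S=A+B$ with $A=S^{(1)}(\nu,0)=\operatorname{diag}\big((2n+\nu+1)^2\big)$ rather than your $S_d=\operatorname{diag}\big(\epsilon_n^{(1)}\big)$, and it obtains the compact resolvent from the identity $(S+\lambda)^{-1}=(A+\lambda)^{-1}(1+T)^{-1}$ rather than from your form bound; but these are cosmetic differences, and the Kato--Rellich reasoning and the Perron dichotomy are the same.

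Part (c), however, is genuinely incomplete. You correctly identify that the tridiagonal recurrence argument breaks down when $\nu=-n_0-\tfrac12$ with $n_0\in\N$, and you reduce simplicity to the disjointness $\operatorname{spec}(S_{\mathrm{fin}})\cap\operatorname{spec}(S_\infty)=\emptyset$, but you do not prove this; the proposed ``Wronskian argument'' or ``analytic dependence'' is not carried out. The paper sidesteps this entirely by going back to the differential equation. By (b), every eigenvector $\{a_n\}$ of $S$ produces, via \eqref{W:expansion2}, a Lam\'e--Wangerin eigenfunction of the first kind; distinct (linearly independent) eigenvectors give distinct Taylor series and hence linearly independent eigenfunctions. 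But any such eigenfunction is a solution of the second-order Lam\'e ODE satisfying $w'(K+{\rm i}K')=0$, and solutions of a second-order linear ODE with a single homogeneous initial condition at an ordinary point form a one-dimensional space. Hence the eigenspace of $S$ for each eigenvalue is one-dimensional, uniformly in~$\nu$. This one-line ODE argument is what you are missing; it renders the block-splitting analysis and the disjointness question unnecessary.
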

\begin{proof} (a) We abbreviate $S=S^{(1)}(\nu,k)$, and write $S=A+B$ with $A=S^{(1)}(\nu,0)$. So $A$ is represented by an infinite diagonal matrix with diagonal entries $(2n+\nu+1)^2$, $n\in\N_0$. It is clear that $A$ is a positive semi-definite self-adjoint operator with compact resolvent. There are two constants $\lambda>0$ and $c\in(0,1)$ such that
\begin{gather}\label{3:eq1}
 \|B x\|\le c\| (A+\lambda)x\|\qquad\text{for all $x\in D(S)$} .
\end{gather}
To prove this it is convenient to write $B=B_1+B_2+B_3$ where each $B_i$ has a matrix representation consisting of only one nonzero ``diagonal'', and estimate $\|Bx\|\le \|B_1x\|+\|B_2x\|+\|B_3x\|$. We can reach $c<1$ because the factor of $n^2$ on the main diagonal of $A$ is $4$ while the factors of~$n^2$ on the three diagonals of~$B$ are $-k^2$, $-2k^2$, $-k^2$, respectively. From~\eqref{3:eq1} we obtain that $T:=B(A+\lambda)^{-1}$ is a bounded linear operator with operator norm $\|T\|\le c<1$. Therefore, $1+T$ is invertible and
\begin{gather*} (S+\lambda)^{-1}=(A+\lambda+B)^{-1} =(A+\lambda)^{-1}(1+T)^{-1} .\end{gather*}
This shows that $(S+\lambda)^{-1}$ is a compact operator. Since $S$ is symmetric, we find that $S$ is self-adjoint; compare \cite[Chapter~V, Theorem~4.3]{K}. From \eqref{3:eq1} we also obtain that $S+\lambda$ is positive definite \cite[Chapter~V, Theorem~4.11]{K}. Therefore, (a) follows.

(b) $h$ is an eigenvalue of $S$ if and only if the recursion \eqref{W:rec2} has a nontrivial solution $\{a_n\}_{n=0}^\infty$ with the property that $\sum\limits_{n=0}^\infty n^4 |a_n|^2<\infty$. By Perron's rule the latter property is equivalent to $a_n=0$ for large $n$ or $\{a_n\}$ is recessive as $n\to\infty$.

(c) If $k\in(0,1)$ the eigenvalues of $S$ are simple because the corresponding eigenfunctions of the first Lam\'e--Wangerin problem are even with respect to $K+{\rm i}K'$.
\end{proof}

Based on Theorem~\ref{W:t1} we write the eigenvalues of the first Lam\'e--Wangerin problem with $k\in(0,1)$ in the form
\begin{gather*} \H_0(\nu,k)<\H_1(\nu,k)<\H_2(\nu,k)<\cdots. \end{gather*}
The Lam\'e--Wangerin function belonging to $\H_m(\nu,k)$ will be denoted by $w_m^{(1)}(z,\nu,k)$. If a normalization is required it will be stated separately. We note that the corresponding eigenvectors $\{a_n\}_{n=0}^\infty$ of $S$ when properly normalized form an orthonormal basis in the Hilbert space~$\ell^2(\N_0)$.

The eigenvalues of $S^{(1)}(\nu,0)$ are $(2n+\nu+1)^2$ for $n\in\N_0$. If we arrange this sequence in increasing order repeated according to multiplicity we denote these eigenvalues by $\H_m(\nu,0)$. Explicitly, they are given by the following lemma.

\begin{Lemma}\label{W:l1} Let $p-1< \nu\le p$ with $p\in\Z$. Then, for all $m\in\N_0$,
\begin{gather*} \H_m(\nu,0)=(2\ell+\nu+1)^2,\end{gather*}
where
\begin{gather*} \ell=\begin{cases} m & \text{if $m+p\ge 0$},\\
\frac12(m-p) & \text{if $m+p<0$, $m+p$ even},\\
\frac12(-m-p-1) & \text{if $m+p<0$, $m+p$ odd}.\\
\end{cases}
\end{gather*}
\end{Lemma}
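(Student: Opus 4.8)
The plan is to reformulate the statement as a sorting problem. The operator $S^{(1)}(\nu,0)$ is diagonal, so its eigenvalues counted with multiplicity form precisely the multiset $\big\{(2n+\nu+1)^2\colon n\in\N_0\big\}$, and $\H_m(\nu,0)$ is by definition its $m$-th smallest element. Writing $x_n:=2n+\nu+1$, it therefore suffices to produce a bijection $\ell\colon\N_0\to\N_0$ for which $m\mapsto|x_{\ell(m)}|$ is nondecreasing; then $(2\ell(m)+\nu+1)^2=x_{\ell(m)}^2$ is automatically the $m$-th smallest square, i.e.\ equals $\H_m(\nu,0)$ (this reasoning is valid even when ties occur, since it identifies the sorted multiset). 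I would prove that the $\ell$ defined in the statement is exactly such a bijection.

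First I would set $\nu=p-\theta$ with $\theta\in[0,1)$, so that $x_n=(2n+p+1)-\theta$. If $p\ge0$ then $\nu>-1$, every $x_n$ is positive and strictly increasing, the condition $m+p\ge0$ always holds, and the claim $\ell(m)=m$ is immediate. So the content lies in the case $p\le-1$, where $x_n$ starts negative and $|x_n|$ first decreases and then increases, so that the sorted order interleaves the ``negative branch'' (small $n$) with the ``positive branch'' (large $n$). Setting $q:=-p\ge1$, the indices with $m+p<0$ are exactly $m\in\{0,1,\dots,q-1\}$, and these are precisely the positions where the interleaving is nontrivial; for $m\ge q$ one has $\ell(m)=m$.

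Finally I would verify the two required properties. For bijectivity, I would split $\{0,\dots,q-1\}$ according to the parity of $m+p$ and check that the even branch $\ell=\tfrac12(m-p)$ and the odd branch $\ell=\tfrac12(-m-p-1)$ map onto two complementary blocks of $\{0,\dots,q-1\}$ (the split point depending on the parity of $q$), so that together with $\ell(m)=m$ for $m\ge q$ the map is a bijection of $\N_0$. For monotonicity, I would substitute the two formulas to obtain the clean expressions $|x_{\ell(m)}|=m+1-\theta$ in the even case, $|x_{\ell(m)}|=m+\theta$ in the odd case, and $|x_{\ell(m)}|=2m+\nu+1$ for $m\ge q$; since consecutive values of $m$ in $\{0,\dots,q-1\}$ flip the parity of $m+p$, the successive differences are $2\theta\ge0$ (even$\to$odd step) and $2-2\theta>0$ (odd$\to$even step), while at the junction $m=q-1\to q$ the difference is again $2-2\theta>0$, so the sequence is nondecreasing.

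I expect the main obstacle to be the parity bookkeeping in establishing bijectivity: one must track how the two formulas partition the block $\{0,\dots,q-1\}$ as $q$ runs through even and odd values, confirm that the two image blocks are disjoint and exhaustive, and check that the junction at $m=q$ glues correctly to the identity tail. By contrast, once the three explicit expressions for $|x_{\ell(m)}|$ are in hand, the monotonicity reduces to the one-line difference computation above.
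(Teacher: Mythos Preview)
The paper states this lemma without proof: it simply remarks that the eigenvalues of $S^{(1)}(\nu,0)$ are $(2n+\nu+1)^2$, $n\in\N_0$, and that arranging them in increasing order yields the formula of the lemma, leaving the verification to the reader. Your proposal therefore supplies exactly the computation the paper suppresses.

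Your argument is correct. The reformulation as a sorting problem via $x_n=2n+\nu+1$ and the substitution $\nu=p-\theta$ with $\theta\in[0,1)$ are the natural moves, and your three closed forms $|x_{\ell(m)}|=m+1-\theta$, $m+\theta$, and $2m+\nu+1$ make both bijectivity and monotonicity transparent. The parity bookkeeping you flag as the main obstacle does indeed work out: for $q=-p$ even the two branches hit $\{q/2,\dots,q-1\}$ and $\{0,\dots,q/2-1\}$, while for $q$ odd they hit $\{(q+1)/2,\dots,q-1\}$ and $\{0,\dots,(q-1)/2\}$, and the junction value $|x_{\ell(q-1)}|=(q-1)+\theta$ (since $(q-1)+p=-1$ is odd) passes to $|x_q|=q+1-\theta$ with the claimed gap $2-2\theta>0$. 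One small point worth stating explicitly in a final write-up: when $\theta=0$ (i.e.\ $\nu=p$) the even$\to$odd step gives difference $0$, which is precisely where the multiset has repeated values; your remark that the argument ``identifies the sorted multiset'' covers this, but it is the one place a careless reader might worry.
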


We will need continuity of the eigenvalues $\H_m(\nu,k)$.

\begin{Theorem}\label{W:t2} The function $(\nu,k)\mapsto \H_m(\nu,k)$ is continuous on $\R\times [0,1)$ for every $m\in\N_0$.
\end{Theorem}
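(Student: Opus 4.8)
The plan is to realize each $\H_m(\nu,k)$ as a reciprocal eigenvalue of the compact, positive, self-adjoint resolvent of $S^{(1)}(\nu,k)$ and to deduce continuity from the fact that eigenvalues of compact self-adjoint operators depend continuously on the operator in the norm topology. First I would fix a point $(\nu_0,k_0)\in\R\times[0,1)$ and work in a compact neighborhood $U=[\nu_0-r,\nu_0+r]\times[0,k_1]$ with $k_1<1$. The crucial preliminary observation is that the diagonal entries $\epsilon_n^{(1)}(\nu,k)$ and the off-diagonal entries $\delta_n(\nu,k)$ in \eqref{W:rec2} grow like $n^2$ with an $n^2$-coefficient that is independent of $\nu$, so that all operators $S^{(1)}(\nu,k)$ with $(\nu,k)\in U$ share the common domain $D(S)$, and the relative bound $c$ in \eqref{3:eq1} can be chosen $<1$ uniformly on $U$ (since $k\le k_1<1$ there). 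Consequently there is a single $\lambda>0$ for which $S^{(1)}(\nu,k)+\lambda$ is positive definite with a lower bound independent of $(\nu,k)\in U$, giving a uniform resolvent bound $\big\|\big(S^{(1)}(\nu,k)+\lambda\big)^{-1}\big\|\le C$.

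Writing $R(\nu,k)=\big(S^{(1)}(\nu,k)+\lambda\big)^{-1}$, the main step is to prove norm-resolvent continuity, namely $\|R(\nu,k)-R(\nu_0,k_0)\|\to 0$ as $(\nu,k)\to(\nu_0,k_0)$. By the resolvent identity this norm equals $\|R(\nu,k)\,\Delta\,R(\nu_0,k_0)\|$ with $\Delta=S^{(1)}(\nu_0,k_0)-S^{(1)}(\nu,k)$, so by the uniform bound $C$ it suffices to show $\|\Delta\,R(\nu_0,k_0)\|\to 0$. Here I would use that the $n^2$-coefficients of the diagonal and of the two off-diagonals of $\Delta$ are $2(k^2-k_0^2)$ and $k^2-k_0^2$, which tend to $0$, while the remaining parts of the entries of $\Delta$ grow at most linearly in $n$ with coefficients that likewise tend to $0$. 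Setting $A_0=S^{(1)}(\nu_0,0)$ (a fixed diagonal operator with $D(A_0)=D(S)$) and arguing as in the proof of Theorem~\ref{W:t1}(a), one obtains a relative estimate $\|\Delta x\|\le b\|A_0 x\|+a\|A_0^{1/2}x\|+a'\|x\|$ for $x\in D(S)$, with $a,a',b\to 0$ as $(\nu,k)\to(\nu_0,k_0)$. Since $A_0=S^{(1)}(\nu_0,k_0)-B_0$ with $B_0$ relatively $A_0$-bounded of bound $<1$, both $A_0\,R(\nu_0,k_0)$ and $A_0^{1/2}R(\nu_0,k_0)$ are bounded operators, whence $\|\Delta\,R(\nu_0,k_0)\|\le b\|A_0 R(\nu_0,k_0)\|+a\|A_0^{1/2}R(\nu_0,k_0)\|+a'C\to 0$.

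Finally I would pass from resolvents to eigenvalues. The operator $R(\nu,k)$ is compact, positive and self-adjoint, and its eigenvalues in decreasing order counted with multiplicity are exactly $\big(\H_m(\nu,k)+\lambda\big)^{-1}$. By the Courant--Fischer min-max principle the $m$-th eigenvalue of a compact self-adjoint operator is $1$-Lipschitz with respect to the operator norm, so the norm-resolvent continuity established above yields $\big(\H_m(\nu,k)+\lambda\big)^{-1}\to\big(\H_m(\nu_0,k_0)+\lambda\big)^{-1}$, and composing with the continuous map $t\mapsto t^{-1}-\lambda$ gives the claimed continuity of $\H_m$. This argument covers the boundary $k=0$ and the coincidences among the values $(2\ell+\nu+1)^2$ of Lemma~\ref{W:l1} automatically, since the min-max bound respects multiplicities and requires no spectral gap.

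The step I expect to be the main obstacle is the uniform relative-boundedness estimate for $\Delta$: one must verify that every entry of $\Delta$ carries a coefficient tending to $0$ and organize the three diagonals so that the $n^2$-terms are absorbed by $A_0\,R(\nu_0,k_0)$ and the $O(n)$-terms by $A_0^{1/2}R(\nu_0,k_0)$, all uniformly over $U$; securing a single $\lambda$ together with a uniform lower bound on the compact neighborhood is the accompanying technical point.
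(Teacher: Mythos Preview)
Your proposal is correct and follows essentially the same strategy as the paper: establish norm-resolvent continuity of $(S^{(1)}(\nu,k)+\lambda)^{-1}$ using a uniform relative bound on a compact parameter set, then invoke the min-max principle to transfer continuity to the ordered eigenvalues. The only difference is organizational: the paper writes $(S+\lambda)^{-1}=(A(\nu)+\lambda)^{-1}(1+T(\nu,k))^{-1}$ with $A(\nu)=S^{(1)}(\nu,0)$ and shows each factor converges in norm, whereas you use the second resolvent identity $R-R_0=R\,\Delta\,R_0$ and bound $\Delta R_0$ via $A_0$ and $A_0^{1/2}$; both routes rest on the same entry-wise estimates and the same uniform choice of $\lambda$.
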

\begin{proof}Set $S(\nu,k)=S^{(1)}(\nu,k)$, $A(\nu)=S(\nu,0)$ and $S(\nu,k)=A(\nu)+B(\nu,k)$. Let $\nu_0>0$ and $k_0\in(0,1)$ be given, and set $\Omega:=[-\nu_0,\nu_0]\times [0,k_0]$. Then we can find $\lambda>0$ large enough and $c\in(0,1)$ such that~\eqref{3:eq1} holds uniformly for $(\nu,k)\in\Omega$. It follows that $T(\nu,k):=B(\nu,k) (A(\nu)+\lambda)^{-1}$ is a bounded linear operator with operator norm $\|T(\nu,k)\|\le c$ for all $(\nu,k)\in\Omega$. As before, we have
\begin{gather}\label{3:eq2}
 (S(\nu,k)+\lambda)^{-1}=(A(\nu)+\lambda)^{-1} (1+T(\nu,k))^{-1} ,\qquad (\nu,k)\in\Omega.
\end{gather}
Suppose we have a sequence $(\nu_n, k_n)\in\Omega$ which converges to $\big(\hat\nu, \hat k\big)$ as $n\to\infty$. Then we can easily show using the definitions of $A$ and $T$ that
\begin{gather*} \big\|(A(\nu_n)+\lambda)^{-1}-(A(\hat\nu)+\lambda)^{-1}\big\|\to 0 \qquad \text{as $n\to\infty$} ,\end{gather*}
and
\begin{gather*} \big\|T(\nu_n,k_n)-T\big(\hat\nu,\hat k\big)\big\|\to 0 \qquad \text{as $n\to\infty$} .\end{gather*}
Using \eqref{3:eq2} we then obtain that
\begin{gather}\label{3:eq3}
\big\|(S(\nu_n,k_n)+\lambda)^{-1}-\big(S\big(\hat\nu,\hat k\big)+\lambda\big)^{-1}\big\|\to0\qquad \text{as $n\to \infty$.}
\end{gather}
If $K_n$ is a sequence of positive definite compact Hermitian operators converging to a positive definite compact Hermitian operator $K$ with respect to the operator norm, then the $m$th largest eigenvalue of $K$ (counted according to multiplicity) converges to the $m$ largest eigenvalue of $K$ as $n\to\infty$ for every $m\in\N_0$. This follows directly from the minimum-maximum-principle If we set $K_n=(S(\nu_n,k_n)+\lambda)^{-1}$, $K=\big(S\big(\hat\nu,\hat k\big)+\lambda\big)^{-1}$ and use~\eqref{3:eq3} we obtain $\H_m(\nu_n,k_n)\to \H_m\big(\hat \nu,\hat k\big)$ as $n\to\infty$ for every $m\in\N_0$ as desired.
\end{proof}

A Lam\'e--Wangerin function of the second kind can be written in the form
\begin{gather}\label{W:expansion3}
 w(\eta)=\eta^{(\nu+1)/2}(\eta_1-\eta)^{1/2}(\eta_2-\eta)^{1/2}\sum_{n=0}^\infty d_n \eta^n,
\end{gather}
where the power series $\sum d_n\eta^n$ has radius $\ge \eta_2$. If we set
\begin{gather*} w(\eta)=(\eta-\eta_1)^{1/2}(\eta-\eta_2)^{1/2} v(\eta) \end{gather*}
in \eqref{W:lame1}, we obtain
\begin{gather*}
 k^2\eta(\eta-\eta_1)(\eta-\eta_2)\left[\frac{{\rm d}^2v}{{\rm d}\eta^2}
+\frac12\left(\frac{1}{\eta}+\frac{3}{\eta-\eta_1}+\frac{3}{\eta-\eta_2}\right)\frac{{\rm d}v}{{\rm d}\eta}\right]\nonumber\\
\qquad{} +\left(\tfrac32 k^2\eta-1+\tfrac12k^2+h-k^2\nu(\nu+1)\frac{(1+\eta)^2}{4\eta}\right) v =0.
\end{gather*}
This gives the recursion
\begin{gather}
 \big(\beta_0^{(2)}-h\big)d_0+\gamma_1 d_1 =0,\nonumber\\
 \alpha_{n+1} d_{n-1}+\big(\beta_n^{(2)}-h\big) d_n+ \gamma_{n+1}d_{n+1}=0,\quad n\ge 1,\label{W:rec3}
\end{gather}
where $\alpha_n$, $\gamma_n$ are as in \eqref{W:rec1} and
\begin{gather*}
\beta_n^{(2)}= \tfrac12k^2\nu(\nu+1)+\big(1-\tfrac12k^2\big)(2n+\nu+2)^2.
\end{gather*}
Note that the equations \eqref{W:rec3} for $n\ge 1$ agree with \eqref{F:rec2} when we set $\mu=\nu+2$. The recursion~\eqref{W:rec3} is given in \cite[Section~15.6(16)]{EMO}.

Alternatively, a Lam\'e--Wangerin function of the second kind can be written as
\begin{gather}\label{W:expansion4}
 w(\eta)=\eta^{(\nu+1)/2}(\eta_1-\eta)^{1/2}\sum_{n=0}^\infty b_n \eta^n,
\end{gather}
where the power series $\sum b_n\eta^n$ has radius $\ge \eta_2$. If we set
\begin{gather*} w(\eta)=(\eta_1-\eta)^{1/2} v(\eta) \end{gather*}
in \eqref{W:lame1}, we obtain \eqref{W:lame2} with $\eta_1$, $\eta_2$ interchanged. This gives the recursion
\begin{gather}
\big(\epsilon_0^{(2)}-h\big)b_0+\delta_1 b_1 =0,\nonumber\\
 \delta_n b_{n-1}+\big(\epsilon_n^{(2)}-h\big) b_n+ \delta_{n+1}b_{n+1}=0,\qquad n\ge 1,\label{W:rec4}
\end{gather}
where
\begin{gather*}
\delta_n = -\tfrac12k^2 n(2n+2\nu+1),\\
\epsilon_n^{(2)} = \tfrac12k^2\nu(\nu+1)+\big(1-\tfrac12k^2\big)(2n+\nu+1)^2+\tfrac14 k^2\eta_2(4n+2\nu+3)\\
\hphantom{\epsilon_n^{(2)}}{} = \tfrac12 k^2\nu(\nu+1)+k'\big(2n+\tfrac32+\nu\big)+\big(1-\tfrac12k^2\big)\big(\tfrac14+\big(2n+\tfrac32+\nu\big)^2\big).
\end{gather*}

For the second Lam\'e--Wangerin problem we have results parallel to Theorem~\ref{W:t1}, Lem\-ma~\ref{W:l1} and Theorem~\ref{W:t2}. We denote the eigenvalues of the second Lam\'e--Wangerin problem by~$\HH_m(\nu,k)$, and the corresponding Lam\'e--Wangerin eigenfunctions by $w^{(2)}_m(z,\nu,k)$.

\section{Analytic continuation of Lam\'e--Wangerin functions}\label{AC}

In the previous section Lam\'e--Wangerin functions were defined on the interval $({\rm i}K',2K+{\rm i}K')$. We analytically continue these functions to the strip $0\le\Im z< K'$ as follows. Using~\eqref{W:etaz} and~\eqref{W:expansion1} a Lam\'e--Wangerin function $w^{(1)}$ of the first kind can be written as
\begin{gather}\label{AC:exp1}
w^{(1)}(z)={\rm e}^{-{\rm i}(\nu+1)(\frac12\pi-\am z)} \sum_{n=0}^\infty c_n (\sn z-{\rm i}\cn z)^{2n} .
\end{gather}
Since the power series $\sum c_n\eta^n$ has radius larger than $1$ and $|\eta|\le1$ for $0\le \Im z<K'$, the expansion~\eqref{AC:exp1} converges in the strip $0\le \Im z<K'$.

If $0<\eta<\eta_1$ we have
\begin{gather}\label{AC:id1}
\frac14 k^2\big(\eta^{1/2}+\eta^{-1/2}\big)^2-1= \frac14k^2\eta^{-1}(\eta_1-\eta)(\eta_2-\eta) .
\end{gather}
If $z$ is on the segment $({\rm i}K',K+{\rm i}K')$ and $\eta$ is given by \eqref{W:etaz} then \eqref{AC:id1} implies
\begin{gather}\label{AC:dn}
 {\rm i}\dn z=\frac12 k \eta^{-1/2}(\eta_1-\eta)^{1/2}(\eta_2-\eta)^{1/2} .
\end{gather}
Therefore, \eqref{W:expansion3} implies
\begin{gather}\label{AC:exp2}
w^{(2)}(z)=2{\rm i}k^{-1}{\rm e}^{-{\rm i}(\nu+2)(\frac12\pi-\am z)}\dn z\sum_{n=0}^\infty d_n(\sn z-{\rm i}\cn z)^{2n} .
\end{gather}
Again, this expansion is convergent in the strip $0\le \Im z<K'$.

In order to deal with expansions \eqref{W:expansion2} and \eqref{W:expansion4} we introduce the function
\begin{gather}\label{AC:I1}
 I_1(z):= (\dn z+\cn z)^{1/2}
\end{gather}
also appearing in \cite{I2}. This function is analytic in the strip $-K'<\Im z<K'$ when the branch of the root is chosen as follows. The function $\dn z+\cn z$ does not assume negative values or zero in the rectangle $-2K<\Re z<2K$, $-K'<\Im z<K'$. We choose the principal branch of the root in~\eqref{AC:I1} in this rectangle. We choose positive imaginary roots on the segments $(-2K,-2K+{\rm i}K')$ and $(2K-{\rm i}K',2K)$. For other~$z$, $I_1(z)$ is determined by $I_1(z+4K)=-I_1(z)$. A second function is defined by
 \begin{gather*}
I_2(z):=-I_1(z+2K)=-(\dn z-\cn z)^{1/2}.
\end{gather*}

For $0<\eta<\eta_1$ we have the identity
\begin{gather}\label{AC:id2}
(1-\eta+q)^{1/2}+ (1-\eta-q)^{1/2} =2^{1/2}(1-k')^{1/2}(\eta_2-\eta)^{1/2} ,
\end{gather}
where
\begin{gather*} q=k(\eta_1-\eta)^{1/2}(\eta_2-\eta)^{1/2}\end{gather*}
and all roots denote positive roots of positive numbers. For $z$ between ${\rm i}K'$ and $K+{\rm i}K'$ we have
\begin{gather*} {\rm i}\cn z=\frac12\big(\eta^{-1/2}-\eta^{1/2}\big) .\end{gather*}
Therefore, it follows from \eqref{AC:dn} and \eqref{AC:id2} that the analytic continuation of $J_1=\eta^{-1/4}(\eta_2-\eta)^{1/2}$ to the strip
$-K'<\Im z< K'$ is given by
\begin{gather}\label{AC:J1}
 (1-k')^{1/2}J_1(z)= {\rm e}^{{\rm i}\frac14\pi} I_1(z)+{\rm e}^{-{\rm i}\frac14\pi} I_2(z) .
\end{gather}
Therefore, the analytic continuation of the Lam\'e--Wangerin function $w^{(1)}$ of the first kind given by~\eqref{W:expansion2} to the strip $0\le \Im z<K'$ is
\begin{gather*}
w^{(1)}(z)={\rm e}^{-{\rm i}(\nu+\frac32)(\frac12\pi-\am z)}J_1(z)\sum_{n=0}^\infty a_n (\sn z-{\rm i}\cn z)^{2n} .
\end{gather*}
Similarly, for $0<\eta<\eta_1$ we have
\begin{gather*}
(1-\eta+q)^{1/2}- (1-\eta-q)^{1/2} =2^{1/2}(1+k')^{1/2}(\eta_1-\eta)^{1/2}.
\end{gather*}
It follows that the analytic continuation of the function $J_2=\eta^{-1/4}(\eta_1-\eta)^{1/2}$ to the strip $-K'<\Im z<K'$ is given by
\begin{gather}\label{AC:J2}
 (1+k')^{1/2}J_2(z)= {\rm e}^{{\rm i}\frac14\pi} I_1(z)-{\rm e}^{-{\rm i}\frac14\pi} I_2(z) .
\end{gather}
If a Lam\'e--Wangerin function $w^{(2)}$ of the second kind is given by~\eqref{W:expansion4} then its analytic conti\-nuation is
\begin{gather*}
w^{(2)}(z)={\rm e}^{-{\rm i}(\nu+\frac32)(\frac12\pi-\am z)}J_2(z)\sum_{n=0}^\infty b_n (\sn z-{\rm i}\cn z)^{2n} .
\end{gather*}

\section{Comparison of eigenvalues}\label{C}

Every Lam\'e--Wangerin function is also a Floquet eigenfunction.

\begin{Lemma}\label{C:l1} Let $\nu\in\R$ and $0<k<1$. A Lam\'e--Wangerin function $w^{(1)}(z)$ of the first kind satisfies
\begin{gather*} w^{(1)}(z+2K)={\rm e}^{{\rm i}(\nu+1)\pi} w^{(1)}(z).\end{gather*}
A Lam\'e--Wangerin function $w^{(2)}(z)$ of the second kind satisfies
\begin{gather*} w^{(2)}(z+2K)={\rm e}^{{\rm i}\nu\pi} w^{(2)}(z).\end{gather*}
\end{Lemma}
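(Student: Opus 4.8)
The plan is to read the functional equation straight off the analytic continuation formulas \eqref{AC:exp1} and \eqref{AC:exp2} derived in the previous section, exploiting that the shift $z\mapsto z+2K$ acts very simply on the building blocks appearing there. Recall from \eqref{F:am} that $t=\tfrac12\pi-\am z$, so the relation $\am(z+2K)=\am z+\pi$ translates into $t\mapsto t-\pi$. The key observation is that the variable $\eta=(\sn z-{\rm i}\cn z)^2$ of \eqref{W:etaz} equals ${\rm e}^{-2{\rm i}t}$ and is therefore invariant under $t\mapsto t-\pi$; consequently every term $(\sn z-{\rm i}\cn z)^{2n}=\eta^n$ of the power series is unchanged by $z\mapsto z+2K$. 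Likewise $\dn^2 z=1-k^2\sn^2 z$ depends only on $\eta$, so $\dn z$ is invariant as well (the standard fact that $\dn$ has period $2K$). Thus only the exponential prefactors transform nontrivially.

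For a Lam\'e--Wangerin function of the first kind I would substitute $z\mapsto z+2K$ into \eqref{AC:exp1}. The power series is unchanged, while the prefactor ${\rm e}^{-{\rm i}(\nu+1)(\frac12\pi-\am z)}={\rm e}^{-{\rm i}(\nu+1)t}$ becomes ${\rm e}^{-{\rm i}(\nu+1)(t-\pi)}={\rm e}^{{\rm i}(\nu+1)\pi}{\rm e}^{-{\rm i}(\nu+1)t}$, which yields $w^{(1)}(z+2K)={\rm e}^{{\rm i}(\nu+1)\pi}w^{(1)}(z)$. For the second kind I would do the same with \eqref{AC:exp2}: the extra factor $\dn z$ and each $\eta^n$ are invariant, while the prefactor ${\rm e}^{-{\rm i}(\nu+2)t}$ picks up ${\rm e}^{{\rm i}(\nu+2)\pi}={\rm e}^{{\rm i}\nu\pi}$, giving $w^{(2)}(z+2K)={\rm e}^{{\rm i}\nu\pi}w^{(2)}(z)$, where I use ${\rm e}^{2\pi{\rm i}}=1$ to drop the $2$.

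This computation is valid wherever \eqref{AC:exp1} and \eqref{AC:exp2} converge, namely on the strip $0\le\Im z<K'$, and since $z$ and $z+2K$ have the same imaginary part both points lie in this strip simultaneously, so no separate continuation argument is needed there. To obtain the identity on the full domain of the analytically continued eigenfunction one observes that both sides solve Lam\'e's equation \eqref{I:lame} (whose coefficient is $2K$-periodic because $\sn^2$ is) and agree on the strip, hence agree everywhere by the identity theorem. The only point requiring care is the bookkeeping of the branch of $\am z$ and the sign conventions for $\sn$, $\cn$, $\dn$ under the half-period shift; phrasing everything through the single-valued variable $\eta={\rm e}^{-2{\rm i}t}$ makes these sign issues vanish, which is why I would route the argument through \eqref{W:etaz} rather than through the half-period formulas for the Jacobi functions directly.
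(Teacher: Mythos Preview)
Your proof is correct and is precisely the argument the paper has in mind: its one-line proof reads ``This follows from \eqref{AC:exp1} and \eqref{AC:exp2}'', and you have simply unpacked that reference by tracking how the shift $z\mapsto z+2K$ (equivalently $t\mapsto t-\pi$) acts on the prefactor, the series in $\eta$, and $\dn z$. There is nothing to add.
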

\begin{proof}This follows from \eqref{AC:exp1} and \eqref{AC:exp2}.
\end{proof}

If $\mu+\nu$ or $\mu-\nu$ is an integer then the Floquet eigenvalues $h_m(\mu,\nu)$ can be expressed in terms of Lam\'e--Wangerin eigenvalues~$H_m^{(j)}(\nu)$. The properties~\eqref{F:rules} show that it is sufficient to
consider the case $\mu=\nu+1$. Then we have the following result.

\begin{Theorem}\label{C:t1} Let $0<k<1$, $\nu\in\R$, $p\in\N_0$ and $p-1<|\nu|\le p$.
\begin{enumerate}\itemsep=0pt
\item[$(a)$] If $\nu\ge 0$ then
\begin{gather*}
h_m(\nu+1,\nu) = \HH_m(-\nu-1),\qquad m=0,1,\dots,p-1,\\
h_{p+2i+1}(\nu+1,\nu) = \HH_{p+i}(-\nu-1),\qquad i\ge 0,\\
h_{p+2i}(\nu+1,\nu) = \H_i(\nu),\qquad i\ge 0.
\end{gather*}
\item[$(b)$] If $\nu<0$ then
\begin{gather*}
h_m(\nu+1,\nu) = \H_m(\nu),\qquad m=0,1,\dots,p-1,\\
h_{p+2i+1}(\nu+1,\nu) = \H_{p+i}(\nu),\qquad i\ge 0,\\
h_{p+2i}(\nu+1,\nu) = \HH_i(-\nu-1),\qquad i\ge 0.
\end{gather*}
\item[$(c)$] If $\nu$ is an integer then
\begin{gather*} h_0(\nu+1,\nu)<h_1(\nu+1,\nu)<\dots<h_p(\nu+1,\nu) ,\end{gather*}
and, for $i\ge 0$,
\begin{gather*} h_{p+2i}(\nu+1,\nu)=h_{p+2i+1}(\nu+1,\nu)< h_{p+2i+2}(\nu+1,\nu)=h_{p+2i+3}(\nu+1,\nu)<\cdots.\end{gather*}
\end{enumerate}
\end{Theorem}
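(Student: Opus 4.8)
The plan is to exploit the observation, already recorded after \eqref{W:rec1} and \eqref{W:rec3}, that for $\mu=\nu+1$ the Floquet recursion \eqref{F:rec1} has coefficients $\tau_n=-\tfrac12k^2(2n+2\nu+1)n$, so that $\tau_0=0$. Because $\tau_0=0$, the equations of \eqref{F:rec1} with index $n\le-1$ involve only the entries $\{c_n\}_{n\le-1}$ and therefore form a closed semi-infinite system, decoupled from $c_0,c_1,\dots$; the only equation linking the two halves is the one with $n=0$. First I would make this block structure precise and then identify each block separately.

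For the upper half I would note that a solution of \eqref{F:rec1} with $c_{-1}=0$ satisfies, at $n=0$, exactly the initial equation $(\beta_0^{(1)}-h)c_0+\gamma_1c_1=0$ of \eqref{W:rec1}, while for $n\ge1$ the two recursions already agree; hence by Theorem~\ref{W:t1}(b) the admissible $h$ (recessive as $n\to\infty$ in the sense of \eqref{F:recessive1}) are precisely the eigenvalues $\H_i(\nu)$. For the lower half I would reindex by $\tilde c_m:=c_{-m-1}$ ($m\ge0$) and compare with the second Lamé--Wangerin recursion \eqref{W:rec3} taken with the parameter $-\nu-1$, which is legitimate since Lamé's equation depends on $\nu$ only through $\nu(\nu+1)=(-\nu-1)(-\nu)$. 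A short computation with $\mu=\nu+1$ shows that the two recursions share the diagonal $\tfrac12k^2\nu(\nu+1)+(1-\tfrac12k^2)(2m+1-\nu)^2$ and the same products of adjacent off-diagonal entries, so they are conjugate by a diagonal substitution $d_m=\kappa_m\tilde c_m$. Since the factors satisfy $\kappa_{m+1}/\kappa_m\to1$, Perron's rule is preserved, and the lower-block eigenvalues are exactly the $\HH_i(-\nu-1)$.

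Combining the halves, with the $n=0$ equation supplying the single linear coupling between the (at most one-dimensional) upper and lower solution spaces, I would conclude that the Floquet spectrum for $\mu=\nu+1$ is, as a multiset, the union of $\{\H_i(\nu)\}$ and $\{\HH_i(-\nu-1)\}$; the inclusion of each family is also immediate from Lemma~\ref{C:l1}, the two carrying the complementary multipliers ${\rm e}^{\pm{\rm i}(\nu+1)\pi}$. To fix the ordering I would pass to $k=0$, where \eqref{F:rec1} is diagonal and the Floquet eigenvalues are $(2n+\nu+1)^2$, $n\in\Z$; sorting these by $|2n+\nu+1|$ and using Lemma~\ref{W:l1} produces exactly the arrangements in (a) and (b), the arguments with $n\ge0$ giving the $\H$-values and those with $n\le-1$ the $\HH$-values. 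For non-integer $\nu$ all Floquet eigenvalues are simple for every $k\in(0,1)$, so a first- and a second-kind eigenvalue can never cross; by the continuity in $k$ of Theorems~\ref{F:t1} and~\ref{W:t2} the $k=0$ arrangement persists for all $k$, giving (a) for $\nu>0$ and (b) for $\nu<0$. The integer case, and with it the coincidence pattern (c), I would obtain by letting $\nu\to p$ through non-integer values and invoking continuity in $\nu$; there the extra vanishing coefficient $\rho_{-p}=0$ forces $\H_i(p)=\HH_{p+i}(-p-1)$, which is the statement that $h_{p+2i}=h_{p+2i+1}$ is a double eigenvalue.

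The coefficient matching in the block identification is routine. The main obstacle is the index bookkeeping in the ordering step: tracking how the two interlaced progressions $|2n+\nu+1|$ merge into one increasing sequence and checking that the offsets $p$, $p+2i$, $p+2i+1$ emerge correctly, especially at the boundary values $\nu=p$, where the two progressions share a term and the degenerate coupling (with a possibly vanishing top entry of the lower eigenvector) must be resolved by the continuity passage to the integer case.
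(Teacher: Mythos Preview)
Your approach is correct and shares the paper's core strategy: both use continuity in $k$ (Theorems~\ref{F:t1} and~\ref{W:t2}) to transport the explicit $k=0$ ordering of the sequences $(2n+\nu+1)^2$ to all $k\in(0,1)$, and both obtain the integer case (and hence~(c)) by letting $\nu$ approach an integer through non-integer values.

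The difference lies in how the set-theoretic identity between the Floquet spectrum and $\{\H_i(\nu)\}\cup\{\HH_i(-\nu-1)\}$ is obtained. The paper simply invokes Lemma~\ref{C:l1}: each Lam\'e--Wangerin eigenfunction is already a Floquet eigenfunction with multiplier ${\rm e}^{\pm{\rm i}(\nu+1)\pi}$, so each $\H_i(\nu)$ and $\HH_i(-\nu-1)$ lies among the $h_m(\nu+1,\nu)$; the fact that this exhausts the Floquet spectrum then drops out of the $k=0$ bijection together with continuity and simplicity (non-integer $\nu$). You instead make explicit the block lower-triangular structure of the doubly-infinite recursion~\eqref{F:rec1} caused by $\tau_0=0$, identify the upper block with~\eqref{W:rec1} and the reindexed lower block (via the diagonal conjugation you describe, whose ratios tend to~$1$) with~\eqref{W:rec3} at parameter $-\nu-1$, and read off the spectral union directly. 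This is precisely the mechanism the paper advertises in the introduction as motivation for the generalized Lam\'e--Wangerin problem, but does not actually use in the proof of the theorem. Your route makes the splitting transparent at the level of the recursion and explains \emph{why} the two families exhaust the Floquet spectrum, at the cost of the coefficient and index bookkeeping you acknowledge; the paper's route is shorter because Lemma~\ref{C:l1} has already packaged the analytic content.
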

\begin{proof} (a), (b) Suppose first that $\nu$ is not an integer. Then the eigenvalues $f_m(k):=h_m(\nu+1,\nu,k)$ form a strictly increasing sequence. By Lemma~\ref{C:l1}, each eigenvalue $g_m(k):=\H_m(\nu,k)$ and $G_m(k):=\HH_m(-\nu-1,k)$ is among the $f$-eigenvalues. This is also true for $k=0$. The sequence $\{f_m(0)\}_{m=0}^\infty$ is given by the sequence $\big\{(2n+\nu+1)^2\big\}_{n\in\Z}$ when arranged in increasing order, $\{g_m(0)\}$ is given by $\{(2n+\nu+1)^2\}_{n=0}^\infty$ in increasing order and $\{G_m(0)\}$ is given by $\big\{(2n+\nu+1)^2\big\}_{n=-\infty}^{-1}$ in increasing order. Because of continuity of the functions $f_m$, $g_m$, $G_m$ (Theorems~\ref{F:t1} and~\ref{W:t2}) the order of these eigenvalues is the same for all $k\in[0,1)$. An analysis of the order at $k=0$ proves (a) and (b) for noninteger~$\nu$. The result extends to integer $\nu$ by continuity.

(c) Let $\nu$ be an integer. Then we apply~(a) or~(b) to $\nu+\epsilon$ in place of $\nu$ and take limits $\epsilon\to 0^{\pm}$. This proves~(c).
\end{proof}

We now compare the eigenvalues $H_m^{(j)}(\nu)$ with $H_m^{(j)}(-\nu-1)$.

\begin{Theorem}\label{C:t2}Let $p\in\N_0$, $0<k<1$. Let either $H=\H$ or $H=\HH$.
\begin{enumerate}\itemsep=0pt
\item[$(a)$] If $-p-\tfrac32<\nu<-p-\tfrac12$ then
\begin{gather*} H_p(\nu)<H_0(-\nu-1)<H_{p+1}(\nu)<H_1(-\nu-1)<H_{p+2}(\nu)<\cdots. \end{gather*}
\item[$(b)$] If $\nu=-p-\tfrac12$ then
\begin{gather*} H_{p-1}(\nu)<H_0(-\nu-1)=H_p(\nu)<H_1(-\nu-1)=H_{p+1}(\nu)<\cdots. \end{gather*}
\end{enumerate}
\end{Theorem}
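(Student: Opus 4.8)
The plan is to exploit the symmetry $\nu \mapsto -\nu-1$ together with the continuity and interlacing structure already established, mimicking the strategy of Theorem~\ref{C:t1}. First I would observe that both $H_m^{(j)}(\nu)$ and $H_m^{(j)}(-\nu-1)$ are eigenvalues of the \emph{same} Lam\'e--Wangerin problem in the sense of the underlying differential equation: by \eqref{F:rules}, Lam\'e's equation \eqref{I:lame} is invariant under $\nu \mapsto -\nu-1$, so the set of eigenvalues of the first (resp.\ second) Lam\'e--Wangerin problem for $\nu$ and for $-\nu-1$ are both contained in the common Floquet spectrum $\{h_m(\nu+1,\nu,k)\}$ (using Lemma~\ref{C:l1} and the rule $h_m(\mu,\nu)=h_m(\mu,-\nu-1)$). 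The idea is therefore to use Theorem~\ref{C:t1} as a bookkeeping device: the $H^{(1)}$- and $H^{(2)}$-eigenvalues together exhaust, and partition, the Floquet eigenvalues, so comparing $H_m(\nu)$ with $H_m(-\nu-1)$ amounts to tracking which Floquet eigenvalue each one equals.

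The key steps, in order, are as follows. First I would reduce to the range $-p-\tfrac32 < \nu \le -p-\tfrac12$, so that $\nu < 0$ and $-\nu-1 \ge p \ge 0$; then Theorem~\ref{C:t1}(b) applies to $\nu$ and Theorem~\ref{C:t1}(a) applies to $\nu'=-\nu-1$ (with its own parameter $p'$, which I must compute: when $-p-\tfrac32<\nu<-p-\tfrac12$ one has $p<-\nu-1<p+\tfrac12$, so $p'=p+1$ and $p'-1<\nu'\le p'$ fails at the endpoint, requiring care). Second, applying Theorem~\ref{C:t1} to both parameters expresses the relevant $H^{(j)}$-values as specific Floquet eigenvalues $h_\ell(\nu+1,\nu,k)$, with explicit index maps $\ell$. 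Third, since for noninteger $\mu=\nu+1$ the Floquet eigenvalues are \emph{strictly} increasing, the desired interlacing $H_p(\nu)<H_0(-\nu-1)<H_{p+1}(\nu)<\cdots$ follows purely from comparing the two index maps and checking they interleave the integers correctly. For part (b), at $\nu=-p-\tfrac12$ one has $\nu'=-\nu-1=p-\tfrac12$, the two parameters coincide up to the symmetry, and the half-integer case forces the coincidences $H_0(-\nu-1)=H_p(\nu)$ etc.; I would handle this either by taking the limit $\nu\to(-p-\tfrac12)^{\pm}$ in the strict inequalities of part (a) (using continuity, Theorem~\ref{W:t2}) and showing the limiting equalities are genuine coincidences, or directly from the index-map identification at the half-integer.

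The main obstacle I anticipate is the index bookkeeping near the boundary of the $\nu$-interval, where the value of $p$ in the hypothesis of Theorem~\ref{C:t1} jumps. Specifically, Theorem~\ref{C:t1} is stated with $p-1<|\nu|\le p$, and as $\nu$ ranges over $(-p-\tfrac32,-p-\tfrac12)$ we have $|\nu|\in(p+\tfrac12,p+\tfrac32)$, so $|\nu|$ straddles the integer $p+1$; I must split according to whether $|\nu|\le p+1$ or $|\nu|>p+1$ and confirm that the two resulting index formulas both yield the same interlacing pattern. A careful, case-free formulation would come from noting that Lemma~\ref{W:l1} gives the $k=0$ ordering $\{(2\ell+\nu+1)^2\}$ explicitly for both $\nu$ and $-\nu-1$, and that $(2\ell+\nu+1)^2=(2\ell'+(-\nu-1)+1)^2$ precisely when $2\ell+\nu+1=\pm(2\ell'-\nu)$; solving this matching at $k=0$ and then invoking continuity (Theorem~\ref{W:t2}) to propagate the order to all $k\in(0,1)$ is, I expect, the cleanest route, with the sign $\pm$ exactly accounting for the two alternatives in parts (a) and (b). The equalities in (b) should then emerge as the degenerate case where the two square roots coincide, i.e.\ $2\ell+\nu+1=-(2\ell'-\nu)$ with $\nu=-p-\tfrac12$.
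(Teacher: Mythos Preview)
Your primary route through Theorem~\ref{C:t1} has a genuine gap. You assert that $H_m^{(j)}(\nu)$ and $H_m^{(j)}(-\nu-1)$ both lie in the \emph{same} Floquet spectrum $\{h_\ell(\nu+1,\nu,k)\}$, but this is false for the same superscript $j$. By Lemma~\ref{C:l1}, a Lam\'e--Wangerin function of the first kind with parameter $\nu$ has Floquet multiplier $e^{{\rm i}(\nu+1)\pi}$, whereas for parameter $-\nu-1$ the multiplier is $e^{{\rm i}(-\nu)\pi}$. The symmetries in~\eqref{F:rules} identify the Floquet spectra for $\mu=\nu+1$ and $\mu=-\nu$ only when $\nu+1\equiv\pm\nu\pmod 2$, i.e.\ only for half-integer $\nu$. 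In the open interval $-p-\tfrac32<\nu<-p-\tfrac12$ this never happens, so $H^{(1)}_m(\nu)$ and $H^{(1)}_\ell(-\nu-1)$ live in \emph{different} Floquet spectra and Theorem~\ref{C:t1} gives you no direct comparison between them. (What Theorem~\ref{C:t1} does compare is $H^{(1)}(\nu)$ with $H^{(2)}(-\nu-1)$, the wrong pairing for this theorem.)

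Your ``cleaner route'' at the end is in fact the paper's argument, but you are missing its key ingredient. Continuity in $k$ (Theorem~\ref{W:t2}) only propagates the $k=0$ ordering to all $k\in(0,1)$ if you know that no collision $H_m(\nu,k)=H_\ell(-\nu-1,k)$ can occur along the way; otherwise two eigenvalues could meet and swap. The paper supplies this by observing that for $\nu$ strictly between half-integers the corresponding eigenfunctions are linearly independent: near $z={\rm i}K'$ one behaves like $(z-{\rm i}K')^{\nu+1}$ and the other like $(z-{\rm i}K')^{-\nu}$, the two distinct Frobenius exponents, and since $2\nu+1\notin\Z$ these cannot coincide. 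Hence the eigenvalues are distinct for every $k\in(0,1)$, the order is frozen, and computing it at $k=0$ via Lemma~\ref{W:l1} finishes~(a). Part~(b) then follows exactly as you suggest, by continuity in $\nu$ and one-sided limits.
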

\begin{proof} We consider the eigenvalues $H_m=\H_m$. The proof for $H_m=\HH_m$ is similar.

(a) Let $-p-\tfrac32<\nu<-p-\tfrac12$. The eigenvalues $H_m(\nu)$, $m\ge 0$, are pairwise distinct, and the eigenvalues $H_\ell(-\nu-1)$, $\ell\ge 0$, are pairwise distinct. The eigenvalues $H_m(\nu)$ are also distinct from $H_\ell(-\nu-1)$ because the corresponding eigenfunctions are linearly independent. Therefore, using continuity of the functions $k\mapsto H_m(\nu,k)$ (Theorem~\ref{W:t2}) the order of the eigenvalues $H_m(\nu,k)$, $H_\ell(-\nu-1,k)$ must be the same for all $k\in[0,1)$. The sequence $\{H_m(\nu,0)\}_{m=0}^\infty$ agrees with $\big\{(2n+\nu+1)^2\big\}_{n=0}^\infty$ after the latter sequence is arranged in increasing order. Similarly, the sequence $\{H_m(-\nu-1,0)\}_{m=0}^\infty$ agrees with $\big\{(2n-\nu)^2\big\}_{n=0}^\infty$ arranged in increasing order. Analysis of this order at $k=0$ implies~(a).

(b) If $p=0$ then $\nu=-\frac12$ and (b) is trivially true because $\nu=-\nu-1$. For $p\ge 1$ statement (b) follows from continuity of the functions $\nu\mapsto H(\nu,k)$ and taking one-sided limits as $\nu\to -p-\frac12$ in~(a).
\end{proof}

We now compare the eigenvalues $\H_m(\nu)$ with $\HH_m(\nu)$.

\begin{Theorem}\label{C:t3}Let $\nu\in\R$, $0<k<1$ and $H_m^{(j)}:=H_m^{(j)}(\nu,k)$.
\begin{enumerate}\itemsep=0pt
\item[$(a)$] If $\nu>-\frac32$ then
\begin{gather*} \H_0<\HH_0<\H_1<\HH_1<\H_2<\HH_2<\cdots.\end{gather*}
\item[$(b)$] If $-p-\frac32<\nu<-p-\frac12$ with $p\in\N$ then
\begin{gather*}\left\{\begin{matrix} \H_0 \\ \HH_0\end{matrix}\right\}< \dots< \left\{\begin{matrix} \H_{p-1} \\ \HH_{p-1}\end{matrix}\right\}< \H_p<\HH_p<\H_{p+1}<\HH_{p+1}<\cdots,\end{gather*}
where, for $m=0,1,\dots,p-1$,
\begin{gather*} \H_m<\HH_m \quad \text{if $m+p$ is even},\qquad \H_m>\HH_m \quad \text{if $m+p$ is odd} .\end{gather*}
\item[$(c)$] If $\nu=-p-\tfrac12$ with $p\in\N$ then
\begin{gather*} \H_0=\HH_0<\H_1=\HH_2<\dots < \H_{p-1}=\HH_{p-1}<\H_p<\HH_p<\cdots .\end{gather*}
\end{enumerate}
\end{Theorem}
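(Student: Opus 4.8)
The plan is to follow the template of the proofs of Theorems~\ref{C:t1} and~\ref{C:t2}: first determine the ordering of the two families at $k=0$ from the explicit spectra, and then propagate it to all $k\in(0,1)$ using continuity of $(\nu,k)\mapsto H_m^{(j)}(\nu,k)$ (Theorem~\ref{W:t2} and its second-kind analogue) together with a non-crossing argument. At $k=0$ the first problem has eigenvalues $(2n+\nu+1)^2$ and the second problem has eigenvalues $(2n+\nu+2)^2$, $n\in\N_0$ (the latter because $\epsilon_n^{(2)}(0)=(2n+\nu+2)^2$, by the analogue of Lemma~\ref{W:l1}). Sorting the combined set by $|\nu+j|$, $j\ge1$, with odd $j$ labelling first-problem and even $j$ labelling second-problem values, I would read off the $k=0$ ordering in each regime; this is a direct computation parallel to the order analysis in Theorem~\ref{C:t2}, and it also produces the parity conditions in part~(b).

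For parts (a) and (b) I would supply the non-crossing through the parity of the eigenfunctions. A first-kind eigenfunction is even about $K+{\rm i}K'$ by~\eqref{W:even} and a second-kind eigenfunction is odd by~\eqref{W:odd}, so a common eigenvalue $h=\H_m(\nu,k)=\HH_{m'}(\nu,k)$ would produce an even and an odd solution of~\eqref{I:lame}, both of the form~\eqref{W:form}. Away from the values $\nu=-p-\tfrac12$ ($p\in\N$), every such solution has $q_0\ne0$, so the exponent $\nu+1$ at ${\rm i}K'$ is attained and, differing from $-\nu$ with no resonance, the form~\eqref{W:form} spans only a one-dimensional solution space; the even and odd solutions would then be proportional, impossible for nonzero functions. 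This covers (a) ($\nu>-\tfrac32$) and (b), whose open interval contains no point $-q-\tfrac12$, $q\in\N$. Hence $\H_m(\nu,k)\ne\HH_{m'}(\nu,k)$ throughout $k\in(0,1)$; since each family is internally strictly increasing (Theorem~\ref{W:t1}(c) and its analogue), no two listed eigenvalues cross and the $k=0$ ordering persists for all $k\in[0,1)$, giving (a) and (b).

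The heart of the matter is part (c), $\nu=-p-\tfrac12$, $p\in\N$, where the eigenvalues must \emph{coincide} for all $k$ and the parity argument is unavailable. The key observation I would use is that here $\delta_p=-\tfrac12k^2p(2p+2\nu+1)=0$, so the shared off-diagonal vanishes and $S^{(j)}(\nu,k)=M^{(j)}\oplus N^{(j)}$ splits into a $p\times p$ block $M^{(j)}$ on indices $0,\dots,p-1$ and a semi-infinite block $N^{(j)}$ on indices $\ge p$. Using $2n+\tfrac32+\nu=2n+1-p$, a short computation shows that the reversal permutation $P\colon n\mapsto p-1-n$ conjugates $M^{(1)}$ into $M^{(2)}$ (it sends $\epsilon_n^{(1)}\mapsto\epsilon_{p-1-n}^{(1)}=\epsilon_n^{(2)}$ and the off-diagonal $\delta_{n+1}\mapsto\delta_{p-1-n}=\delta_{n+1}$), so $M^{(1)}$ and $M^{(2)}$ are isospectral. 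Reindexing $n\mapsto n-p$ then identifies $N^{(j)}$ with the full operator $S^{(j)}(-\nu-1,k)$ at $-\nu-1=p-\tfrac12>-\tfrac32$, so the tail eigenvalues satisfy $\H_{p+i}(\nu)=\H_i(p-\tfrac12)$ and likewise for $\HH$, and interlace by part~(a).

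To close (c) I would note two points. First, simplicity of the spectrum of $S^{(j)}(\nu,k)$ for $k\in(0,1)$ (Theorem~\ref{W:t1}(c)) forces $M^{(j)}$ and $N^{(j)}$ to have disjoint spectra; since at $k=0$ the finite-block values $(i+\tfrac12)^2$, $i=0,\dots,p-1$, lie below all tail values, continuity pins the finite block to $\H_0,\dots,\H_{p-1}$ (resp.\ $\HH_0,\dots,\HH_{p-1}$) for every $k$. Second, the isospectrality of $M^{(1)}$ and $M^{(2)}$, both with simple spectra, yields $\H_m(\nu)=\HH_m(\nu)$ for $m=0,\dots,p-1$. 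I expect the principal obstacle to be exactly this persistence of coincidences: a direct continuity limit from~(b) gives only one-sided inequalities, and it is the vanishing of $\delta_p$ together with the reversal symmetry $PM^{(1)}P=M^{(2)}$ that upgrades them to equalities for all $k$.
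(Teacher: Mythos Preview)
Your arguments for (a) and (b) match the paper's: the key point is that away from $\nu=-p-\tfrac12$ the Frobenius solution~\eqref{W:form} with exponent $\nu+1$ spans a one-dimensional space, so the even and odd eigenfunctions cannot share an eigenvalue; continuity in $k$ then freezes the ordering seen at $k=0$.

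For (c) your route is \emph{different} from the paper's, and both are valid. The paper does not use the block decomposition at all: it simply takes the two-sided limit in $\nu$. Approaching $\nu=-p-\tfrac12$ from the left (interval $(-p-\tfrac32,-p-\tfrac12)$, part~(b) with parameter~$p$) gives one inequality for each $m\le p-1$; approaching from the right (interval $(-p-\tfrac12,-p+\tfrac12)$, which is part~(b) with parameter $p-1$, or part~(a) if $p=1$) reverses the parity and gives the opposite inequality. Squeezing yields $\H_m=\HH_m$ for $m=0,\dots,p-1$. The tail $m\ge p$ is handled by invoking Theorem~\ref{C:t2}(b), $H^{(j)}_{m+p}(\nu)=H^{(j)}_m(-\nu-1)$, and then applying part~(a) at $-\nu-1=p-\tfrac12>-\tfrac32$. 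So your remark that ``a direct continuity limit from~(b) gives only one-sided inequalities'' is correct only if one restricts to the interval on the \emph{left} of $-p-\tfrac12$; the paper closes the gap by using the neighbouring interval on the right as well.

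Your alternative via $\delta_p=0$ and the block splitting $S^{(j)}=M^{(j)}\oplus N^{(j)}$ is correct and more structural: the reversal identity $PM^{(1)}P=M^{(2)}$ (this is exactly the observation recorded in Section~\ref{AL}, that $S_p^{(1)}$ is the anti-diagonal mirror of $S_p^{(2)}$) gives the equalities directly, and the shift $n\mapsto n-p$ identifying $N^{(j)}$ with $S^{(j)}(-\nu-1,k)$ is the matrix incarnation of Theorem~\ref{C:t2}(b). The simplicity-plus-continuity step pinning $M^{(j)}$ to the bottom $p$ eigenvalues is sound. What your approach buys is an explanation of \emph{why} the coincidences occur (the finite blocks are conjugate), and a direct link to the algebraic Lam\'e functions of Section~\ref{AL}; what the paper's approach buys is brevity, since it avoids re-deriving the block symmetry and reuses Theorem~\ref{C:t2}(b) rather than reproving it.
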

\begin{proof} (a) Let $\nu>-\frac32$. Then the eigenfunctions of the two Lam\'e--Wangerin problems are constant multiples of solution~\eqref{W:form} with $q_0\ne 0$. Therefore, the eigenvalues of the two problems are mutually distinct. By continuity of the functions $k\mapsto H_m^{(j)}(\nu,k)$ (Theorem~\ref{W:t2}) the order of the eigenvalues $H_m^{(j)}$ must be the same for all $k\in[0,1)$. Now
\begin{gather*} \H_m(\nu,0)=(2m+\nu+1)^2,\qquad \HH_m(\nu,0)=(2m+\nu+2)^2, \end{gather*}
which implies (a).

(b) Let $-p-\frac32<\nu<-p-\frac12$ with $p\in\N$. Again, the eigenvalues of the two Lam\'e--Wangerin problems are mutually distinct, and the order of these eigenvalues must be the same for all $k\in[0,1)$. The sequence $\big\{H_m^{(j)}(\nu,0)\big\}_{m=0}^\infty$ is the same as $\{(2n+\nu+j)^2\}_{n=0}^\infty$ but the latter one has to be ordered increasingly. An analysis of the order leads to the arrangement stated in~(b).

(c) Let $\nu=-p-\tfrac12$ with $p\in\N$. Continuity of the functions $\nu\mapsto H_m^{(j)}(\nu,k)$ and part (b) show that $\H_m=\HH_m$ for $m=0,1,\dots,p-1$. We know from Theorem~\ref{C:t2}(b) that
\begin{gather*} H^{(j)}_{m+p}(\nu)=H_m^{(j)}(-\nu-1) , \qquad m\ge 0.\end{gather*}
Since $-\nu-1>-\frac32$ the rest of statement (c) follows from part (a).
\end{proof}

\section{Algebraic Lam\'e functions}\label{AL}
If $\nu+\frac12$ is a nonzero integer then Lam\'e's differential equation \eqref{I:lame} has solutions in finite terms which are usually called algebraic Lam\'e functions. These solutions were investigated in~\cite{Erd1,F,I2,L}. We obtain these functions as follows.

Let $\nu=-p-\tfrac12$ with $p\in\N$. For $j=1,2$ we introduce the symmetric tridiagonal $p$ by $p$ matrices
\begin{gather*} S_p^{(j)}=
\begin{pmatrix} \epsilon_0^{(j)} & \delta_1 & 0 & & \\
\delta_1 & \epsilon^{(j)}_1 & \ddots & \ddots & \\
0 & \ddots & \ddots & \ddots & 0\\
 &\ddots &\ddots & \epsilon^{(j)}_{p-2} & \delta_{p-1} \\
 & & 0 & \delta_{p-1} & \epsilon^{(j)}_{p-1}
\end{pmatrix},
\end{gather*}
where
\begin{gather*}
\epsilon_n^{(j)}=\tfrac12k^2\big(p^2-\tfrac14\big)+(-1)^j k'(2n+1-p)+\big(1-\tfrac12k^2\big)\big(\tfrac14+(2n+1-p)^2\big),\\
\delta_n=k^2n(p-n) .
\end{gather*}
The coefficient $\delta_p$ vanishes in \eqref{W:rec2}, \eqref{W:rec4}. Therefore, if $(a_0,a_1,\dots,a_{p-1})^t$ is an eigenvector of~$S^{(j)}_p$ and $a_n:=0$ for $n\ge p$ then \eqref{W:expansion2}, \eqref{W:expansion4} are Lam\'e--Wangerin functions of the first and second kind, respectively.

We note that $S_p^{(1)}$ is the mirror image of $S_p^{(2)}$ with respect to the anti-diagonal, that is, we have
\begin{gather*} \epsilon^{(1)}_{n}=\epsilon^{(2)}_{p-1-n},\qquad \delta_n=\delta_{p-n} .\end{gather*}
It follows that $S_p^{(1)}$ and $S_p^{(2)}$ have the same eigenvalues and the corresponding eigenvectors are inverse to each other, that is, if $(a_0,a_1,{\dots},a_{p-1})^t$ is an eigenvector for $S_p^{(1)}$ then
$(a_{p-1},a_{p-2},{\dots},a_0)^t$ is an eigenvector for $S_p^{(2)}$ belonging to the same eigenvalue. According to Theorem~\ref{C:t3}(c), the common eigenvalues of $S_p^{(j)}$ are
\begin{gather*} H_m^{(1)}(-p-\tfrac12)=H_m^{(2)}\big({-}p-\tfrac12\big),\qquad m=0,1,\dots,p-1 .\end{gather*}

If $(a_0,a_1,\dots,a_{p-1})^t$ is a (real) eigenvector of $S_p^{(1)}$ then
\begin{gather}\label{AL:alg1}
 \w=\eta^{-\frac12 p+\frac14}(\eta_2-\eta)^{1/2}\sum_{n=0}^{p-1} a_n \eta^n,\\
 \ww=\eta^{-\frac12 p+\frac14}(\eta_1-\eta)^{1/2}\sum_{n=0}^{p-1} a_{p-n-1}\eta^n \label{AL:alg2}
\end{gather}
are solutions of \eqref{W:lame2}. These are algebraic Lam\'e functions expressed in the variable $\eta$. We note that the functions $\w$ and $\ww$ are essentially Heun polynomials. For if we set $w=\eta^{-\frac12 p+\frac14}(\eta_j-\eta)^{1/2}v(s)$ and $\eta=\eta_1 s$, then we obtain the Heun equation for $v(s)$ and $\sum\limits_{n=0}^{p-1} a_n (\eta_1 s)^n$ is a~Heun polynomial.

If we substitute \eqref{W:etaz} in \eqref{AL:alg1}, \eqref{AL:alg2} and use the functions $J_1(z)$, $J_2(z)$ defined in \eqref{AC:J1}, \eqref{AC:J2} we obtain
\begin{gather*}
\w(z)= J_1(z) \sum_{n=0}^{p-1} a_n(\sn z-{\rm i}\cn z)^{2n-p+1},\\
\ww(z)= J_2(z)\sum_{n=0}^{p-1}a_n (\sn z+{\rm i}\cn z)^{2n-p+1}.
\end{gather*}
We know from Lemma \ref{C:l1} that
\begin{gather*}
\w(z+2K)= {\rm i}(-1)^p \w(z),\qquad
\ww(z+2K)= {\rm i}(-1)^{p+1} \ww(z).
\end{gather*}
Moreover, we have
\begin{gather*} (1+k')^{1/2}\overline{\w(\bar z)}=-{\rm i}(1-k')^{1/2} \ww(z), \end{gather*}
and, for $x\in\R$,
\begin{gather*}
\overline{\w(x)}= \w(2K-x),\qquad
\overline{\ww(x)}= -\ww(2K-x),
\end{gather*}
which shows that the real part of $\w(x)$ is a function even with respect to $x=K$ while the imaginary part of $\w(x)$ is odd with respect to $x=K$.

One should notice that if $\nu=-p-\frac12$ and $h$ is an eigenvalue of the matrix $S_p^{(j)}$ then all (nontrivial) solutions of Lam\'e's equation qualify as ``algebraic Lam\'e functions''. We picked the basis of two solutions, one even and one odd with respect to $z=K+{\rm i}K'$. Ince \cite{I2} considered the basis of even and odd solutions (with respect to $z=0$) while Erd\'elyi~\cite{Erd1} has the basis of even or odd solutions with respect to $z=K$.

In the simplest case $\nu=-\frac32$ we have
\begin{gather*} H_0^{(1)}\big({-}\tfrac32\big)=H_0^{(2)}\big({-}\tfrac32\big)=\tfrac14\big(1+k^2\big) \end{gather*}
and
\begin{gather*} w_0^{(1)}(z)=J_1(z),\qquad w_0^{(2)}(z)=J_2(z) .\end{gather*}
If $\nu=-\frac52$ then
\begin{gather*}
 H_0^{(1)}\big({-}\tfrac52\big)=H_0^{(2)}\big({-}\tfrac52\big)=\tfrac54\big(1+k^2\big)-\big(1-k^2+k^4\big)^{1/2},\\
 H_1^{(1)}\big({-}\tfrac52\big)=H_1^{(2)}\big({-}\tfrac52\big)=\tfrac54\big(1+k^2\big)+\big(1-k^2+k^4\big)^{1/2}.
\end{gather*}
If we choose $a_0=-k^2$, $a_1=\frac34 k^2+\frac94 -\frac12k^2\eta_1- H_m^{(1)}\big({-}\frac52\big)$, $m=0,1$, then
\begin{gather*}
 w_m^{(1)}(z) = J_1(z)(a_0(\sn z+{\rm i}\cn z)+a_1(\sn z-{\rm i}\cn z)),\\
 w_m^{(2)}(z) = J_2(z)(a_0(\sn z-{\rm i}\cn z)+a_1(\sn z+{\rm i}\cn z)).
\end{gather*}

\section{Lam\'e polynomials}\label{LP}
Let $\nu=-p-1$ with $p\in\N_0$. It is well-known \cite[Chapter~IX]{A} that there are $2p+1$ distinct values of $h$ for which~\eqref{I:lame} admits nontrivial solutions which are polynomials in $\cn z$, $\sn z$, $\dn z$. In our notation these values of $h$ are
\begin{gather}\label{LP:ev1}
 H_m^{(1)}(-p-1),\qquad m=0,1,\dots,p
\end{gather}
and
\begin{gather}\label{LP:ev2}
H_m^{(2)}(-p-1),\qquad m=0,1,\dots,p-1 .
\end{gather}
Since $\alpha_{p+1}=0$ in \eqref{W:rec1}, the numbers \eqref{LP:ev1} are the eigenvalues of the $p+1$ by $p+1$ tridiagonal matrix
\begin{gather*} T_{p+1}^{(1)}=
\begin{pmatrix} \beta_0 & \gamma_1 & 0 & & \\
\alpha_1 & \beta_1 & \ddots & \ddots & \\
0 & \ddots & \ddots & \ddots & 0\\
 &\ddots &\ddots & \beta_{p-1} & \gamma_p \\
 & & 0 & \alpha_p & \beta_p
\end{pmatrix},
\end{gather*}
where
\begin{gather*}
\alpha_n= \tfrac12 k^2(p+1-n)(2n-1),\\
\beta_n= \tfrac12k^2 p(p+1)+\big(1-\tfrac12k^2\big)(2n-p)^2,\\
\gamma_n= \tfrac12k^2 (2p+1-2n)n .
\end{gather*}
If $(c_0,c_1,\dots,c_p)^t$ is an eigenvector of $T_{p+1}^{(1)}$ then
\begin{gather*} w=\eta^{-p/2}\sum_{n=0}^p c_n \eta^n \end{gather*}
is a solution of \eqref{W:lame1}.
After substituting \eqref{W:etaz} we obtain
\begin{gather*} w=\sum_{n=0}^p c_n(\sn z-{\rm i}\cn z)^{2n-p} .\end{gather*}
Indeed, since $(\sn z-{\rm i}\cn z)^{-1}=\sn z+{\rm i} \cn z$, these solutions are polynomials in $\cn z$, $\sn z$. The matrix $T_{p+1}^{(1)}$ has the symmetries
\begin{gather*} \alpha_n=\gamma_{p-n+1},\qquad \beta_n=\beta_{p-n} .\end{gather*}
Therefore, the space of symmetric vectors $\{c_n\}_{n=0}^p$ ($c_n=c_{p-n}$, $n=0,1,\dots,p$), as well as the space of antisymmetric vectors is invariant under $T_{p+1}^{(1)}$. Thus eigenvectors of $T_{p+1}^{(1)}$ will lie in one of these invariant subspaces.

If $p$ is even we find $\tfrac12 p+1$ Lam\'e polynomials of the form $P\big(\sn^2 z\big)$ where $P$ is a polynomial of degree $\tfrac12p$ if we use symmetric eigenvectors, and $\tfrac12 p$ Lam\'e polynomials of the form $\cn z\sn z P\big(\sn^2 z\big)$ where $P$ is a polynomial of degree $\tfrac12 p-1$ if we use antisymmetric eigenvectors. If $p$ is odd we find $\tfrac12 (p+1)$ Lam\'e polynomials of the form $\sn z P\big(\sn^2 z\big)$ where $P$ is a~polynomial of degree $\tfrac12(p-1)$ if we use symmetric eigenvectors, and $\tfrac12 (p+1)$ Lam\'e polynomials of the form~$\cn z P\big(\sn^2 z\big)$ where~$P$ is a polynomial of degree~$\tfrac12(p-1)$ if we use antisymmetric eigenvectors.

Similarly, Lam\'e--Wangerin functions of the second kind belonging to the eigenvalues \eqref{LP:ev2} are Lam\'e polynomials that have the factor~$\dn z$.

\section{Zeros of Lam\'e--Wangerin functions}\label{Z}

We first determine the number of zeros of Lam\'e--Wangerin functions $w_m^{(j)}(z)$ in the open interval $({\rm i}K',K+{\rm i}K')$.
\begin{Theorem}\label{Z:t1} Let $j=1,2$, $m\in\N_0$, $\nu\in\R$, $k\in(0,1)$.
\begin{enumerate}\itemsep=0pt
\item[$(a)$] If $\nu>-\frac32$ then $w_m^{(j)}$ has exactly $m$ zeros in $({\rm i}K',K+{\rm i}K')$.
\item[$(b)$] If $-p-\tfrac32<\nu\le -p-\tfrac12$, $p\in \N$, then $w_m^{(j)}$ has exactly $\max\{0,m-p\}$ zeros in $({\rm i}K',K+{\rm i}K')$.
\end{enumerate}
\end{Theorem}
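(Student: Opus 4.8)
The plan is to pass to the variable $u=z-\mathrm{i}K'$, so that the segment $(\mathrm{i}K',K+\mathrm{i}K')$ becomes $(0,K)$ and the equation becomes \eqref{W:SL}. The even/odd splitting \eqref{W:even}--\eqref{W:odd} turns the two Lam\'e--Wangerin problems into boundary value problems on $(0,K)$ carrying the Frobenius condition $w\sim q_0 u^{\nu+1}$ at the singular endpoint $u=0$ together with $w'(K)=0$ (first kind) or $w(K)=0$ (second kind). Counting zeros in $(\mathrm{i}K',K+\mathrm{i}K')$ is then counting zeros of the eigenfunction in $(0,K)$.

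For part (a) I would invoke classical singular Sturm--Liouville oscillation theory. When $\nu>-\tfrac32$ the selected solution $u^{\nu+1}$ is square integrable near $u=0$, so it generates a self-adjoint realization (limit point at $u=0$ for $\nu\ge\tfrac12$, limit circle with the boundary condition picking $u^{\nu+1}$ for $-\tfrac32<\nu<\tfrac12$). The oscillation theorem then yields exactly $m$ zeros in the open interval for the $m$th eigenfunction, which is (a). The whole difficulty lies in (b).

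For part (b) the solution $u^{\nu+1}$ fails to be square integrable at $0$, the self-adjoint oscillation theorem is unavailable, and the naive ``one new zero per eigenvalue'' rule breaks down. My strategy is a deformation-plus-computation argument. First I would show that the zero count $N_m^{(j)}(\nu,k)$ is locally constant except when a zero reaches an endpoint: a zero cannot reach $u=K$, since that forces $w(K)=w'(K)=0$ and hence $w\equiv0$; simple eigenvalues force simple zeros, ruling out interior collisions; and for $\nu$ away from the thresholds $-p-\tfrac12$ one has $q_0\ne0$, so no zero enters or leaves at $u=0$. Hence $N_m^{(j)}$ is constant on $(-p-\tfrac32,-p-\tfrac12)\times(0,1)$ and, fixing a non-integer $\nu_0\in(-p-1,-p-\tfrac12)$ (where the $k=0$ spectrum is simple), constant in $k$ down to $k=0$. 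At $k=0$, equation \eqref{W:SL} reads $w''+(h-\nu(\nu+1)\csc^2 u)w=0$ on $(0,\tfrac\pi2)$, whose solution with the correct exponent is $w=(\sin u)^{\nu+1}C_n^{(\nu+1)}(\cos u)$ with $h=(n+\nu+1)^2$, where $n=2\ell$ for the first and $n=2\ell+1$ for the second kind; its zeros in $(0,\tfrac\pi2)$ are exactly those of the ultraspherical polynomial $C_n^{(\nu+1)}$ in $(0,1)$.

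The crux, and the step I expect to be the main obstacle, is counting the real zeros of $C_n^{(\lambda)}$ in $(0,1)$ when $\lambda=\nu+1\in(-p-\tfrac12,-p+\tfrac12)$ is negative: unlike the classical range $\lambda>-\tfrac12$, several zeros leave $(-1,1)$ into the complex plane, and the key fact to establish (or cite) is that exactly $\max\{0,\lfloor n/2\rfloor-p\}$ zeros remain in $(0,1)$. Granting this, I would combine it with the explicit ordering of the eigenvalues $(2\ell+\nu+1)^2$ from Lemma~\ref{W:l1} at $\nu_0$: the reshuffling sends the low-degree, few-zero solutions to the bottom of the spectrum, giving $\ell(m)<p$ with zero count $0$ for $m<p$ and $\ell(m)=m$ for $m\ge p$, so that $N_m^{(j)}=\max\{0,m-p\}$. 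By the constancy already established this value persists for all $\nu$ in the open interval and all $k\in(0,1)$, and the endpoint $\nu=-p-\tfrac12$ follows by one-sided continuity (or by examining the algebraic Lam\'e functions of Section~\ref{AL} directly). The delicate points to watch are the non-accumulation of zeros at the non-$L^2$ endpoint and the negative-parameter ultraspherical count itself, which can be proved inline via the contiguous relation $\tfrac{d}{dx}C_n^{(\lambda)}=2\lambda\,C_{n-1}^{(\lambda+1)}$.
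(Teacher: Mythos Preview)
Your proposal is coherent and would work, but it takes a different route from the paper, and the step you yourself flag as the ``crux'' is exactly what the paper's argument is designed to avoid.

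For part~(a) the paper does \emph{not} appeal to singular Sturm--Liouville oscillation theory. Instead it proves that, for $\nu$ away from the half-integers $-p-\tfrac12$, the normalized Frobenius solution $(z-\mathrm{i}K')^{-\nu-1}w(z,h,\nu)$ depends continuously on $(z,h,\nu)$, so the zero count of $w_m^{(1)}(\cdot,\nu)$ is locally constant in $\nu$; then one evaluates at $\nu=0$, where the eigenfunctions are the explicit sines $\sin\!\big((2m+1)\tfrac{\pi}{2K}(z-\mathrm{i}K')\big)$ with exactly $m$ zeros. This is close in spirit to your continuity argument but uses deformation in $\nu$ at fixed $k$ rather than a $k\to0$ limit.

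For part~(b) the paper exploits a symmetry you do not use: since $\nu(\nu+1)=(-\nu-1)(-\nu)$, the Lam\'e--Wangerin eigenfunctions for parameter $\nu$ and for parameter $-\nu-1$ satisfy the \emph{same} differential equation, only with different values of $h$. When $-p-\tfrac32<\nu\le-p-\tfrac12$ one has $-\nu-1\ge -\tfrac12>-\tfrac32$, so part~(a) already gives the zero counts of the $(-\nu-1)$-eigenfunctions. The eigenvalue comparison of Theorem~\ref{C:t2} then feeds directly into Sturm's comparison theorem: $H_m(\nu)\le H_0(-\nu-1)$ for $m\le p$ forces no zeros, while $H_m(\nu)<H_{m-p}(-\nu-1)$ for $m>p$ gives at most $m-p$ zeros, and equality at the endpoint $\nu=-p-\tfrac12$ (where $H_m(\nu)=H_{m-p}(-\nu-1)$) together with continuity in $\nu$ gives exactly $m-p$.

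Your $k\to0$ deformation plus Gegenbauer-zero count is a legitimate alternative, but the zero distribution of $C_n^{(\lambda)}$ for $\lambda<-\tfrac12$ is genuinely delicate (zeros can be complex, can sit at $\pm1$, and the usual orthogonality/interlacing machinery fails); it is not something the contiguous relation $\tfrac{d}{dx}C_n^{(\lambda)}=2\lambda\,C_{n-1}^{(\lambda+1)}$ alone settles, since that relation links zeros of $C_n^{(\lambda)}$ to \emph{critical points}, not zeros, and the induction has to track zeros moving in and out of $[-1,1]$. The paper's Sturm-comparison trick with the dual parameter $-\nu-1$ bypasses this entirely and is the cleaner path.
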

\begin{proof} Consider $j=1$. The proof for $j=2$ similar. Let $P$ be the set of all real numbers different from $-p-\frac12$ for all $p\in\N$. For $h\in\R$, $\nu\in P$, let $w(z,h,\nu)$ be the solution of \eqref{I:lame} given locally at $z={\rm i}K'$ by~\eqref{W:form} with $q_0=1$. Then one can show that $(z-{\rm i}K')^{-\nu-1}w(z,h,\nu)$ is continuous for $z$ in $[{\rm i}K',K+{\rm i}K']$, $h\in\R$, $\nu\in P$ ($k$~fixed). If we set $w_m(z,\nu)=w\big(z,\H_m(\nu,k),\nu\big)$ then $(z-{\rm i}K')^{-\nu-1} w_m(z,\nu)$ is continuous for $z\in[{\rm i}K',K+{\rm i}K']$ and $\nu\in P$. This implies that the number of zeros of $w_m(\cdot,\nu)$ in $({\rm i}K',K+{\rm i}K')$ is finite and it is locally constant as a function of~$\nu$.

(a) follows by considering $\nu=0$:
\begin{gather}\label{Z:nu0}
 w_m^{(1)}(z,0,k)=(-1)^m \sin\left((2m+1)\frac{\pi}{2K}(z-{\rm i}K')\right).
\end{gather}

(b) Suppose $-p-\tfrac32<\nu\le -p-\tfrac12$ with $p\in \N$. Let $m=0,1,\dots,p$. By Theorem \ref{C:t2}(a), we have $H_m(\nu)\le H_0(-\nu-1)$. By (a), $w_0(\cdot,-\nu-1)$ has no zeros in $({\rm i}K',K+{\rm i}K')$. Therefore, by Sturm comparison, $w_m(\cdot,\nu)$ also has no zeros in this interval.

Now consider $m>p$. If $\nu=-p-\frac12$ then, by Theorem \ref{C:t2}(b), $H_m(\nu)=H_{m-p}(-\nu-1)$. Therefore, by (a), $w_m(\cdot, \nu)$ has $m-p$ zeros in $({\rm i}K',K+{\rm i}K')$. If $-p-\frac32<\nu<p-\frac12$ then, by Theorem~\ref{C:t2}(a), $H_m(\nu)<H_{m-p}(-\nu-1)$. Therefore, $w_m(\cdot,\nu)$ can have at most $m-p$ zeros in $({\rm i}K',K+{\rm i}K')$. If $\nu=-p-\frac12$ we just showed that there are $m-p$ zeros. By continuity, there are
exactly $m-p$ zeros. For the latter step Lam\'e--Wangerin functions should be normalized by the initial conditions $w(K+{\rm i}K')=1$, $w'(K+{\rm i}K')=0$.
\end{proof}

Now we look for zeros of Lam\'e--Wangerin functions in the strip $0\le \Im z<K'$.

\begin{Lemma}\label{Z:l1}A Lam\'e--Wangerin function which is not a Lam\'e polynomial has no zeros on the real axis.
\end{Lemma}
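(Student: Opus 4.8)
The plan is to rule out real zeros of a Lam\'e--Wangerin function $w$ by comparing it with its reflected conjugate $\tilde w(z):=\overline{w(\bar z)}$ and applying a Wronskian argument. Since $h,\nu,k$ are real and $\overline{\sn^2(\bar z,k)}=\sn^2(z,k)$, the coefficient of $w$ in \eqref{I:lame} satisfies $\overline{q(\bar z)}=q(z)$, so $\tilde w$ is again a solution of \eqref{I:lame} near the real axis, where $w$ is analytic (Section~\ref{AC}). As \eqref{I:lame} has no first-derivative term, the Wronskian $W:=w\tilde w'-w'\tilde w$ is constant. If $w(x_0)=0$ for some $x_0\in\R$, then $\tilde w(x_0)=\overline{w(x_0)}=0$ and hence $W=0$. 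Thus it is enough to prove that $w$ and $\tilde w$ are linearly independent whenever $w$ is not a Lam\'e polynomial.

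To identify $\tilde w$ I would conjugate the expansions of Section~\ref{AC}. On the real axis $\am$, $\sn$, $\cn$ and $\dn$ all satisfy $\overline{f(\bar z)}=f(z)$, and the coefficients $c_n$ in \eqref{AC:exp1} (resp.\ $d_n$ in \eqref{AC:exp2}) may be taken real because $h$ and the recursions \eqref{W:rec1}, \eqref{W:rec3} are real. Conjugation therefore turns $\sn z-{\rm i}\cn z$ into $\sn z+{\rm i}\cn z=(\sn z-{\rm i}\cn z)^{-1}$ and reverses the sign of the exponent of the exponential prefactor. In the variable $\eta=(\sn z-{\rm i}\cn z)^2$ of \eqref{W:etaz}, writing $\w(\eta)=\eta^{(\nu+1)/2}\sum_{n\ge0}c_n\eta^n$ as in \eqref{W:expansion1}, this yields $\tilde w^{(1)}(\eta)=\w(1/\eta)$, and similarly $\tilde w^{(2)}\propto\ww(1/\eta)$. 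This merely reflects the invariance of the even equation \eqref{I:lame} under $z\mapsto-z$, i.e.\ of \eqref{W:lame1} under $\eta\mapsto1/\eta$ (recall $\eta_1\eta_2=1$).

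It remains to decide when $\w(1/\eta)$ is a scalar multiple of $\w(\eta)$. With $g(\eta)=\sum_{n\ge0}c_n\eta^n$, a relation $\w(1/\eta)=c\,\w(\eta)$ reads $g(1/\eta)=c\,\eta^{\nu+1}g(\eta)$. The left-hand side carries only the exponents $0,-1,-2,\dots$ while the right-hand side carries only $\nu+1,\nu+2,\dots$. If $\nu\notin\Z$ these exponent sets are disjoint, so no such $c$ exists; if $\nu\in\Z$ they overlap only when $\nu+1\le0$, that is $\nu=-p-1$ with $p\in\N_0$, and then matching coefficients forces $c_n=0$ for $n>p$, so that $\w=\eta^{-p/2}\sum_{n=0}^{p}c_n\eta^n$ is a Lam\'e polynomial (an eigenvector of $T^{(1)}_{p+1}$, Section~\ref{LP}). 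The second-kind case is handled the same way after pulling out the factor $(\eta_1-\eta)^{1/2}(\eta_2-\eta)^{1/2}$ of \eqref{W:expansion3}, which is invariant under $\eta\mapsto1/\eta$ up to the branch bookkeeping set up in \eqref{AC:dn}; there a linear dependence forces $\nu=-p-1$ with $p\ge1$ and $\ww$ a second-kind Lam\'e polynomial. Hence $w$ and $\tilde w$ are linearly dependent exactly when $w$ is a Lam\'e polynomial, and for every other Lam\'e--Wangerin function they are independent, so the Wronskian argument forbids real zeros.

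I expect the delicate point to be the exponent matching across the split $\nu\in\Z$ versus $\nu\notin\Z$, together with the second-kind branch factor: one must check that a linear dependence between $w$ and $\tilde w$ can arise \emph{only} through termination of the defining recursion, that is only for genuine Lam\'e polynomials. The conjugation of the series and the ensuing coefficient comparison are otherwise mechanical.
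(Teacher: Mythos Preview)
Your proof is correct and rests on the same idea as the paper's: compare $w$ with its complex conjugate on the real axis and argue that a common real zero forces linear dependence, which in turn forces the defining series to terminate. The paper organizes the dependence analysis somewhat differently. For non-integer $\nu$ it invokes Lemma~\ref{C:l1} to note that $w$ and $\overline w$ carry distinct Floquet multipliers $e^{\pm i\pi(\nu+1)}$ (resp.\ $e^{\pm i\pi\nu}$) and are hence independent. For integer $\nu$ it passes to the $t$-variable, observes that $\Re w(t)=\sum c_n\cos((2n+\nu+1)t)$ is even and $\Im w(t)=-\sum c_n\sin((2n+\nu+1)t)$ is odd, and that a common zero forces one of them to vanish identically---whence $c_n=0$ for large $n$. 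Your Laurent-exponent matching in $\eta=e^{-2it}$ is essentially the Fourier-to-Laurent transcription of the same computation, packaged more uniformly.

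One small point worth tightening: your ``disjoint exponent sets'' remark for $\nu\notin\Z$ is informal, since $\eta^{\nu+1}g(\eta)$ is multivalued in the annulus and has no Laurent expansion to compare. The rigorous version is precisely the monodromy/Floquet argument: going once around $|\eta|=1$ multiplies $\eta^{\nu+1}$ by $e^{2\pi i(\nu+1)}\ne 1$ while $g(1/\eta)$ returns to itself, so $g(1/\eta)=c\,\eta^{\nu+1}g(\eta)$ forces $g\equiv 0$. This is exactly what the paper does via Lemma~\ref{C:l1}. Your integer-$\nu$ argument by uniqueness of Laurent coefficients is clean and correct.
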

\begin{proof}If $\mu$ is not an integer then a nontrivial Floquet solution $w(z)$, $z\in\R$, of \eqref{I:lame} with $w(z+2K)={\rm e}^{{\rm i}\mu \pi}w(z)$ does not have zeros on the real axis. This is because the conjugate of~$w(z)$ is a~Floquet solution with conjugate multiplier ${\rm e}^{-{\rm i}\mu \pi}$, and ${\rm e}^{{\rm i}\mu \pi}$, ${\rm e}^{-{\rm i}\mu \pi}$ are distinct. So~$w(z)$ and its conjugate function are linearly independent. It follows from Lemma~\ref{C:l1} that Lam\'e--Wangerin functions have no zeros on the real axis if~$\nu$ is not an integer.

Suppose that $\nu$ is an integer, and $w(z)$ is a Lam\'e--Wangerin function belonging to the eigenvalue $\H_m(\nu)$. Suppose that $w(z_0)=0$.\ with $z_0\in\R$. Using \eqref{W:expansion1} and the substitution \eqref{F:am} we have
\begin{gather*} w(t)=\sum_{n=0}^\infty c_n {\rm e}^{-{\rm i}t(2n+\nu+1)} \end{gather*}
and this function has a zero at $t_0\in\R$. The coefficients $c_n$ are real so the functions
\begin{gather}
\Re w(t)= \sum_{n=0}^\infty c_n \cos(2n+\nu+1) t ,\qquad t\in\R\label{Z:cos}\\
\Im w(t)= -\sum_{n=0}^\infty c_n \sin(2n+\nu+1)t,\qquad t\in\R\label{Z:sin}
\end{gather}
both vanish at $t=t_0$. The functions~\eqref{Z:cos}, \eqref{Z:sin} are both solutions of the differential equa\-tion~\eqref{F:lame2} with the same values for $h$ and $\nu$. Since they have a common zero these solutions
must be linearly dependent. Now~$\Re w(t)$ is an even function and $\Im w(t)$ is odd. So one of the functions~$\Re w(t)$,~$\Im w(t)$ must vanish identically. This implies that $c_n=0$ for large enough $n$ and so $w(z)$ is a Lam\'e polynomial. The proof is similar for Lam\'e--Wangerin function of the second kind.
\end{proof}

According to \eqref{W:expansion2} we write a Lam\'e--Wangerin function of the first kind as
\begin{gather*} w_m^{(1)} =\eta^{(\nu+1)/2}(\eta_2-\eta)^{1/2} v^{(1)}_m(\eta,\nu,k) ,\end{gather*}
where
\begin{gather*} v_m^{(1)}(\eta,\nu,k)=\sum_{n=0}^\infty a_n\eta^n \end{gather*}
is given by a power series with radius $\ge\eta_2>1$. Similarly, we write a Lam\'e--Wangerin function of the second kind as
\begin{gather*} w_m^{(2)} =\eta^{(\nu+1)/2}(\eta_1-\eta)^{1/2} v^{(2)}_m(\eta,\nu,k) .\end{gather*}

\begin{Theorem}\label{Z:t2} Let $m\in\N_0$, $\nu\in\R$, $k\in(0,1)$.
\begin{enumerate}\itemsep=0pt
\item[$(a)$] Suppose that $-m-\nu\not\in\N$, and choose $\ell\in\N_0$ such that $\H_m(\nu,0)=(2\ell+\nu+1)^2$; see Lemma~{\rm \ref{W:l1}}. Then $v_m^{(1)}(\cdot,\nu,k)$ has exactly $\ell$ zeros in the unit disk $|\eta|<1$ counted by multiplicity.
\item[$(b)$] Suppose that $-m-\nu-1\not\in\N$, and choose $\ell\in\N_0$ such that $\HH(\nu,0)=(2\ell+\nu+2)^2$. Then $v_m^{(2)}(\cdot,\nu,k)$ has exactly $\ell$ zeros in the unit disk $|\eta|<1$ counted by multiplicity.
\end{enumerate}
\end{Theorem}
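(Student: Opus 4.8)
The plan is to count the zeros by a homotopy in the modulus $k$, keeping $\nu$ fixed with $-m-\nu\notin\N$, and letting $k$ run over $[0,k_0]$ for an arbitrary target $k_0\in(0,1)$. The number of zeros of $v_m^{(1)}(\cdot,\nu,k)$ in the open unit disk is an integer that varies continuously with $k$ as long as no zero reaches the circle $|\eta|=1$, so it is constant on $[0,k_0]$; I would then read it off at the degenerate endpoint $k=0$. Part (a) is treated below, and (b) is identical after replacing the factorization $w_m^{(1)}=\eta^{(\nu+1)/2}(\eta_2-\eta)^{1/2}v_m^{(1)}(\eta)$ by $w_m^{(2)}=\eta^{(\nu+1)/2}(\eta_1-\eta)^{1/2}v_m^{(2)}(\eta)$ and the hypothesis by $-m-\nu-1\notin\N$.

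First I would settle the base case $k=0$, where $S^{(1)}(\nu,0)$ is the diagonal operator with entries $(2n+\nu+1)^2$, $n\in\N_0$, and the coefficients in $v_m^{(1)}(\eta,\nu,k)=\sum_{n\ge0}a_n\eta^n$ form the eigenvector for $\H_m(\nu,k)$. I would check that the hypothesis makes $\H_m(\nu,0)=(2\ell+\nu+1)^2$ a simple eigenvalue, with $\ell$ the index supplied by Lemma~\ref{W:l1}: if $\nu$ is not a negative integer the numbers $(2n+\nu+1)^2$ are pairwise distinct, while if $\nu=-q$ with $q\in\N$ the hypothesis $-m-\nu\notin\N$ forces $m\ge q$, whence $\ell=m$ and the only competing index $n=q-1-m$ is negative. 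The eigenvector is therefore $e_\ell$, so $v_m^{(1)}(\eta,\nu,0)=c\,\eta^\ell$ with $c\ne0$, which has exactly $\ell$ zeros in $|\eta|<1$.

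The main obstacle is to show that no zero ever reaches the boundary, that is, $v_m^{(1)}(\cdot,\nu,k)\ne0$ on $|\eta|=1$ for every $k\in[0,k_0]$. I would transport this to the $z$-plane: for real $z$ one has $|\sn z-{\rm i}\cn z|^2=\sn^2 z+\cn^2 z=1$, so by \eqref{W:etaz} the real axis maps onto the circle $|\eta|=1$, and there the prefactors $\eta^{(\nu+1)/2}$, $(\eta_2-\eta)^{1/2}$ are nonzero since $\eta\ne0$ and $|\eta|=1<\eta_2$. Thus a boundary zero of $v_m^{(1)}$ would be a real zero of $w_m^{(1)}$. Now Lemma~\ref{Z:l1} says that only Lam\'e polynomials have real zeros, and the hypothesis excludes these: first-kind Lam\'e polynomials occur only for $\nu=-q$ with $q\in\N$, and only among $\H_0,\dots,\H_{q-1}$ (Section~\ref{LP}), whereas $-m-\nu\notin\N$ forces $m\ge q$. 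Hence $w_m^{(1)}$ is a Lam\'e--Wangerin function that is not a Lam\'e polynomial and so has no real zeros; this clears the boundary for $k\in(0,1)$, and at $k=0$ the boundary is clear because $v_m^{(1)}=c\,\eta^\ell$.

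Finally I would assemble the homotopy. The eigenvalue $\H_m(\nu,k)$ is simple for $k\in(0,1)$ by Theorem~\ref{W:t1}(c) and at $k=0$ by the base case, and the resolvents $(S^{(1)}(\nu,k)+\lambda)^{-1}$ vary continuously in operator norm (as in the proof of Theorem~\ref{W:t2}); hence the eigenprojection, and with it each coefficient $a_n(\nu,k)$, is continuous on $[0,k_0]$. Since the series $\sum a_n\eta^n$ has radius $\ge\eta_2$, bounded below on $[0,k_0]$, the functions $v_m^{(1)}(\cdot,\nu,k)$ converge locally uniformly on $|\eta|<\eta_2$ as $k$ varies. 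Combined with the boundary non-vanishing, Hurwitz's theorem (equivalently, continuity of the winding number of $v_m^{(1)}(\cdot,\nu,k)$ along $|\eta|=1$) shows that the zero count in $|\eta|<1$ is locally constant, hence constant on the connected interval $[0,k_0]$, and by the base case it equals $\ell$. As $k_0\in(0,1)$ is arbitrary, (a) follows, and (b) is obtained in the same way.
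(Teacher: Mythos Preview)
Your strategy is sound and takes a genuinely different route from the paper. The paper deforms in $\nu$ with $k$ fixed and splits into two cases: for $\nu>-m-1$ it runs the homotopy $s\mapsto v_m(\eta,s\nu,k)$ down to $\nu=0$ and reads off the answer from the explicit sine~\eqref{Z:nu0}; for $-p-1<\nu<-p$ with $m<p$ it first reduces (by further homotopies in $\nu$ and $k$) to the algebraic value $\nu=-p-\tfrac12$, where $v_m^{(1)}$ is a polynomial of degree $p-1$ coming from a \emph{finite} matrix, and only then lets $k\to0$. You instead fix $\nu$ throughout and deform in $k$ directly to $0$; the hypothesis $-m-\nu\notin\N$ does double duty, ensuring both that the limiting eigenvalue $\H_m(\nu,0)$ is simple and that no Lam\'e polynomial is met along the path. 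This handles both of the paper's cases at once and is a genuine simplification.

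One step, however, is not justified as written. From continuity of each coefficient $a_n(k)$ together with a uniform lower bound on the radius of convergence you cannot infer locally uniform convergence of $\sum_n a_n(k)\eta^n$ on the closed disk $|\eta|\le1$: neither pointwise nor $\ell^2$-convergence of the coefficients controls the tail uniformly (consider $a_n(k)=\delta_{n,N(k)}$ with $N(k)\to\infty$, which has infinite radius yet fails to converge uniformly on $|\eta|=1$). What is missing is an equicontinuity estimate, and the operator machinery you already invoke provides one: writing $(S+\lambda)=(1+T)(A+\lambda)$ with $\|T(\nu,k)\|\le c<1$ uniformly on $[0,k_0]$ (proof of Theorem~\ref{W:t2}) and using $Sa=ha$ gives $(A+\lambda)a(k)=(1+T(k))^{-1}(h(k)+\lambda)a(k)$, so $(A+\lambda)a(k)$ is bounded in $\ell^2$ uniformly in $k$ and in fact varies continuously. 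This yields a uniform bound on $\sum_n n^4|a_n(k)|^2$, whence by Cauchy--Schwarz the tails $\sum_{n>N}|a_n(k)|$ are small uniformly in $k$; uniform convergence on $|\eta|\le1$ follows, and your Hurwitz/winding-number argument then goes through. With this addition your proof is complete.
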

\begin{proof} We prove only (a). The proof of (b) is similar. We normalize the Lam\'e--Wangerin functions $w_m(z)$ of the first kind by setting $w_m(K+{\rm i}K')=1$. Then $w_m(z,\nu,k)$ is the solution of~\eqref{I:lame} with $h=\H_m(\nu,k)$ determined by the initial conditions $w(K+{\rm i}K')=1$, $w'(K+{\rm i}K')=0$. By continuous parameter dependence of solutions of initial value problems of linear differential equations, and using Theorem~\ref{W:t2}, we obtain that $w_m(z,\nu,k)$ is a continuous function of $(z,\nu,k)$ for $z\in\R$, $\nu\in\R$, $k\in(0,1)$. Since $|\eta|=1$ is in correspondence with $z\in[0,2K)$, we see that $v_m(\eta,\nu,k)$ is a~continuous function of $|\eta|=1$, $\nu\in\R$, $k\in(0,1)$. We want to apply Rouch\'e's theorem to the homotopy $s\mapsto v_m(\eta,s\nu,k)$ for $s\in[0,1]$. If $v_m(\eta,s\nu,k)\ne 0$ on the unit circle $|\eta|=1$ for all $s\in[0,1]$, then $v_m(\cdot, s\nu,k)$ has the same number of zeros in $|\eta|<1$ for $s\in[0,1]$.

Suppose that $\nu>-m-1$. By Lemma \ref{Z:l1}, the function $v_m(\eta,s\nu,k)$ has no zeros on the unit circle $|\eta|=1$ for $0\le s\le 1$ and so the number of zeros of $v_m(\cdot,\nu,k)$ in the open unit disk agrees with that of $v_m(\cdot,0,k)$. It follows from~\eqref{Z:nu0} that the number of zeros of $v_m(\cdot,\nu,k)$ in the open unit disk is equal to~$m$. Under our assumption on $(\nu,m)$ we have $\ell=m$, so we obtain state\-ment~(a) for $\nu>-m-1$.

Now we assume that $-p-1<\nu<-p$ with $p\in\N$ and $m<p$. We use similar homotopies to show that the number of zeros of $v_m(\cdot,\nu,k)$ may depend on $p$ and $m$ but not on $\nu, k\in(0,1)$. So we consider $\nu=-p-\frac12$. Then $w$ is an algebraic Lam\'e function and $v_m(\eta)= \sum\limits_{n=0}^{p-1} a_n \eta^n$ is a~polynomial. Let $k_n\in(0,1)$ be a sequence converging to~$0$. Since the vector $(a_0,a_1,\dots,a_{p-1})^t$ is an eigenvector of the matrix $S_p^{(1)}$ from Section~\ref{AL} it is easy to see that when properly normalized the eigenvectors belonging to $k_n$ converge to the vector $(a_0,\dots,a_{p-1})^t$ with all components equal to $0$ except $a_\ell=1$. Therefore, under the new normalization $v_m\big(\eta,-p-\frac12,k_n\big)$ converges uniformly to $\eta^\ell$ as $n\to 0^+$. By Rouch\'e's theorem, we obtain the desired statement.

This completes the proof.
\end{proof}

Using the map \eqref{W:etaz} the unit disk $|\eta|<1$ can be related to a domain in the $z$-plane. Consider the rectangle
\begin{gather*} Q=\{z\in\C\colon 0<\Re z<2K,\, 0<\Im z<K'\} .\end{gather*}
The function $z\mapsto \eta$ is a conformal map from $Q$ onto the unit disk $|\eta|<1$ with a branch cut along the interval $(-1,\eta_1)$. If $z$ starts at $z=0$ and moves clockwise around the boundary of~$Q$, then $\eta$ starts at $\eta=-1$ and moves in the mathematically positive direction along the unit circle returning to $\eta=-1$ when $z=2K$. Then $\eta$ moves from $\eta=-1$ to $\eta=0$ when $z$ reaches $z={\rm i}K'$. Then $\eta$ moves to $\eta_1$ when $z=K+{\rm i}K'$ and returns to $\eta=0$, then to $\eta=-1$. It follows that the set
\begin{gather*} \tilde Q=\{z\colon 0\le \Re z\le K, \, 0<\Im z\le K'\}\cup \{z\colon K<\Re z<2 K, \, 0<\Im z<K'\}\end{gather*}
is mapped bijectively onto the open unit disk $|\eta|<1$.

Theorem \ref{Z:t2}(a) can be extended to include Lam\'e polynomials. If $\nu=-m-1,-m-2,\dots$ then let $\ell_1$ be the smallest nonnegative integer $n$ satisfying $\H_m(\nu,0)=(2n+\nu+1)^2$ and $\ell_2$ the largest such integer. Then $v_m$ has $\ell_1$ zeros in the open unit disk $|\eta|<1$ and $\ell_2$ zeros in the closed unit disk $|\eta|\le 1$. This follows from the known location of zeros of Lam\'e polynomials \cite[Section~9.4]{A}.
Similarly, Theorem~\ref{Z:t2}(b) can be extended.

\section[The limit $k\to 0$ of Lam\'e--Wangerin functions]{The limit $\boldsymbol{k\to 0}$ of Lam\'e--Wangerin functions}\label{L}
Substituting $u=\frac{2K}{\pi} s$ in \eqref{W:SL}, we obtain the differential equation
\begin{gather}\label{L:ode1}
\frac{{\rm d}^2w}{{\rm d}s^2}+\frac{4K^2}{\pi^2}\left(h-\nu(\nu+1)\sn^{-2} \left(\frac{2K}{\pi} s\right)\right) w =0,\qquad 0<s<\pi .
\end{gather}
In \eqref{L:ode1} we set $h=\H_m(\nu,k)$ and take $w=w_m^{(1)}(s,\nu,k)$ as the corresponding Lam\'e--Wangerin eigenfunction
normalized by the initial condition
\begin{gather*} w\left(\frac\pi2\right)=1,\qquad \frac{{\rm d}w}{{\rm d}s}\left(\frac\pi2\right)=0 .\end{gather*}
By Theorem \ref{W:t2}, $\H_m(\nu,k)\to \H_m(\nu,0)=(2\ell+\nu+1)^2$ as $k\to 0^+$, where $\ell\in\N_0$ is chosen according to Lemma~\ref{W:l1}. As $k\to 0^+$ we see that $w_m^{(1)}(s,\nu,k)$ converges to the solution $W^{(1)}_m(s,\nu)$ of the differential equation
\begin{gather}\label{L:ode2}
\frac{{\rm d}^2W}{{\rm d}s^2}+\left((2\ell+\nu+1)^2-\frac{\nu(\nu+1)}{\sin^2 s}\right) W =0
\end{gather}
satisfying the initial conditions
\begin{gather*} W\left(\frac\pi2\right)=1,\qquad \frac{{\rm d}W}{{\rm d}s}\left(\frac\pi2\right)=0 .\end{gather*}
The convergence
\begin{gather*} w_m^{(1)}(s,\nu,k)\to W_m^{(1)}(s,\nu)\qquad \text{as $k\to 0^+$} \end{gather*}
is uniform on compact subintervals of $(0,\pi)$. Differential equation \eqref{L:ode2} appears in the theory of Gegenbauer polynomials \cite[equation~(4.7.11)]{Sz}. We find that
\begin{gather*} W_m^{(1)}(s,\nu)=(\sin s)^{\nu+1} F\big({-}\ell,\ell+\nu+1;\tfrac12;\cos^2 s\big) ,\end{gather*}
where $F$ denotes the hypergeometric function. Equivalently, using Gegenbauer polyno\-mials $G_n^{(\lambda)}(x)$ we have \cite[equation~(4.7.30)]{Sz}
\begin{gather*} W_m^{(1)}(s,\nu)=(\sin s)^{\nu+1}(-1)^\ell \binom{\ell+\nu}{\ell}^{-1} G_{2\ell}^{(\nu+1)}(\cos s) .\end{gather*}
The binomial coefficient may vanish but the formula remains valid if we take limits $\nu\to \nu_0$ at exceptional values $\nu=\nu_0$.

Similarly, let $\ww_m(s,\nu,k)$ be the solution of \eqref{L:ode1} with $h=H_m^{(2)}(\nu,k)$ satisfying the initial conditions
\begin{gather*} w\left(\frac\pi2\right)=0,\qquad \frac{{\rm d}w}{{\rm d}s}\left(\frac\pi2\right)= 1 .\end{gather*}
We choose $\ell\in\N_0$ such that $\HH_m(\nu,0)=(2\ell+\nu+2)^2$. Then we obtain
\begin{gather*} w_m^{(2)}(s,\nu,k)\to W^{(2)}_m(s,\nu)=-(\sin s)^{\nu+1}\cos s F\big({-}\ell,\ell+\nu+2;\tfrac32;\cos^2 s\big) \end{gather*}
as $k\to0^+$ uniformly on compact subintervals of $(0,\pi)$. In terms of Gegenbauer polynomials we have
\begin{gather*} W^{(2)}_m(s,\nu)=(\sin s)^{\nu+1}(-1)^{\ell+1}\left( 2(\nu+1) \binom{\ell+\nu+1}{\ell}\right)^{-1} G_{2\ell+1}^{(\nu+1)}(\cos s) .\end{gather*}

\pdfbookmark[1]{References}{ref}
\LastPageEnding

\end{document}